\theoremstyle{plain}      
\newtheorem{thm}{Theorem}     
\newtheorem{theorem}[thm]{Theorem}     
\newtheorem{corollary}[thm]{Corollary}     
\newtheorem{lemma}[thm]{Lemma}     
\newtheorem{proposition}[thm]{Proposition}     
\theoremstyle{remark}      
\newtheorem{example}[thm]{Example} 
\newtheorem{remark}[thm]{Remark}
\theoremstyle{definition}      
\newtheorem{definition}[thm]{Definition}     
\def\al{{\alpha}}
\def\de{{\delta}}
\def\De{{\Delta}}
\def\om{{\omega}}
\def\si{{\sigma}}
\def\epsilon{{\varepsilon}}
\DeclareMathAlphabet{\doba}{U}{msb}{m}{n}         
\gdef\mC{\doba{C}}
\gdef\mN{\doba{N}}
\gdef\mQ{\doba{Q}}
\gdef\mR{\doba{R}}
\gdef\mZ{\doba{Z}}
\def\rH{\mathscr{H}}
\def\rV{\mathscr{V}}
\def\vol{{\mathop{\rm vol}}}
\def\Scal{{\mathop{\rm Scal}}}
\def\SO{{\mathop{\rm SO}}}
\def\card{{\mathop{\rm card}}}
\def\id{{\mathop{\rm id}}}
\def\di{{\,\rm d}}
\let\scal\Scal
\def\grad{{\mathop{\rm grad}}}
\let\pa\partial     
\let\na\nabla     
\let\ti\tilde
\let\witi\widetilde
\newcommand{\definedas}{\mathrel{\raise.095ex\hbox{\rm :}\mkern-5.2mu=}}
\begin{document}     
%%%%%%%%%%%%%%%%%%%%%%%%%%%%%%%%%%%%%%%%%%%%%%%%%%%%%%%%%%%%%%%%%%%%%%%%%

%%%%%%%%%%%%%%%%%%%%%%%%%%%%%%%%%%%%%%%%%%%%%%%%%%%%%%%%%%%%%%%%%%%%%%%%%
%\begin{center}
%%\framebox{\framebox{
%\vbox{This is project {\red \Project}\\
%Current version {\blue\Version}, from
%{\blue\Datum}, most recent changes by {\blue\Person}.}
%}}
%\end{center}

%%%%%%%%%%%%%%%%%%%%%%%%%%%%%%%%%%%%%%%%%%%%%%%%%%%%%%%%%%%%%%%%%%%%%%%%%

\title{The $\boldsymbol{S^1}$-equivariant Yamabe invariant of 3-manifolds}
\def\emailaddrname{{\itshape E-mail}}
\author{Bernd Ammann} 
\address{Fakult\"at f\"ur  Mathematik \\ 
Universit\"at Regensburg \\
93040 Regensburg \\  
Germany}
\email{bernd.ammann@mathematik.uni-regensburg.de}

\author{Farid Madani}
\address{Institut f\"ur  Mathematik \\ 
Goethe Universit\"at Frankfurt \\
60325 Frankfurt am Main\\  
Germany}
\email{madani@math.uni-frankfurt.de}

\author{Mihaela Pilca}
\address{Fakult\"at f\"ur  Mathematik \\ 
Universit\"at Regensburg \\
93040 Regensburg \\  
Germany}

\email{mihaela.pilca@mathematik.uni-regensburg.de}

\begin{abstract}
We show that the $S^1$-equivariant Yamabe invariant of the $3$-sphere, endowed with the Hopf action, is equal to the (non-equivariant) Yamabe invariant of the $3$-sphere. More generally, we establish a topological upper bound for the $S^1$-equivariant 
Yamabe invariant of any closed oriented $3$-manifold endowed with an $S^1$-action.  
Furthermore, we prove a convergence result  for the equivariant Yamabe constants of an accumulating sequence of subgroups of a compact Lie group acting on a closed manifold.
\end{abstract}

\subjclass[2010]{}
%It is the 2010 classification, but 2010 is not recognized by the style
% 
% 35J60   Nonlinear PDE of elliptic type
% 35P30   Nonlinear eigenvalue problems, nonlinear spectral theory for PDO
% 57R65   Surgery and handlebodies
% 58J50   Spectral problems; spectral geometry; scattering theory
% 58C40   Spectral theory; eigenvalue problems

\date{\today}

\keywords{Yamabe invariant; Yamabe constant; scalar curvature; $S^1$-action; $2$-orbifold} 

\maketitle

\setcounter{tocdepth}{1}
%\tableofcontents

%%%%%%%%%%%%%%%%%%%%%%%%%%%%%%%%%%%%%%%%%%%%%%%%%%%%%%%%%%%%%%%%%%%%%%%%%
\section{Overview over the classical Yamabe invariant}
%%%%%%%%%%%%%%%%%%%%%%%%%%%%%%%%%%%%%%%%%%%%%%%%%%%%%%%%%%%%%%%%%%%%%%%%%

The Yamabe constant $\mu(M, [g])$ of an $n$-dimensional conformal compact 
manifold $(M, [g])$ is the infimum of the restriction to the conformal class $[g]$ of the Einstein--Hilbert functional defined on the set of all Riemannian metrics as
$$h\longmapsto \frac{\int_M \scal_h\,dv_h}{\vol(M,h)^{\frac{n-2}{n}}}.$$ 
Aubin \cite{aubin:76} 
proved that the Yamabe constant of $(M,[g])$ is bounded above by the 
Yamabe constant of the sphere,  {\it i.e.}\/ 
$\mu(M,[g])\leq \mu(S^n,[g_{st}])$.
The Yamabe invariant $\sigma(M)$ of a compact manifold $M$ is defined as 
 \[\si(M):=\sup_{[g]\in C(M)}\mu(M,[g]),\]
where $C(M)$ is the set of all conformal classes on $M$. 
It follows that $\sigma(M)\leq \sigma(S^n)=\mu(S^n,[g_{st}])$. 
In particular, the Yamabe invariant of any compact manifold is finite. 
The Yamabe invariant $\si(M)$ is positive if and only if a metric of positive 
scalar curvature exists on $M$.

In dimension~$2$, the Yamabe invariant is a multiple of the Euler 
characteristic. For $n\geq 3$ it is in general 
a difficult problem to compute the Yamabe invariant, and 
only in few cases it can be calculated explicitly.
Aubin \cite{aubin:76} proved for the $n$-dimensional 
sphere $\sigma(S^n)=\mu(S^n,[g_{st}])=n(n-1)(\vol(S^n,g_{st}))^{2/n}$.
Kobayashi \cite{kobayashi:87} and  Schoen \cite{schoen:89} proved that 
$\sigma(S^{n-1}\times S^1)=\sigma(S^n)$. For many closed 
manifolds $M$, one can show 
$\si(M)=0$ as the existence of metrics with 
positive scalar curvature is obstructed, whereas conformal classes
$[g_i]$ with $\mu(M,[g_i])\to 0$ can be written down explicitly. 
For example the~$n$-torus $T^n$ does not carry a metric of positive scalar 
curvature which can be shown with enlargeability type index obstructions 
by Gromov and Lawson or with the hypersurface obstruction by Schoen and Yau.
For the standard metric $g_0$ we have $\mu(T^n,[g_0])=0$, so $\si(T^n)=0$.
Similarly we know $\si(M)=0$ for all nilmanifolds, and quotients thereof.

In order to determine non-zero values for $\si$, many modern techniques 
were used: Ricci-flow, Atiyah-Singer index theorem, Seiberg-Witten theory, 
and the Bray-Huisken inverse mean curvature proof of the Penrose inequality. 
In dimension~$3$, values for the Yamabe 
invariant of irreducible manifolds were already conjectured and 
partially studied in \cite{anderson.msri, And}. 

For example, on a hyperbolic 
$3$-manifold~$H^3/\Gamma$ 
the supremum in the definition of the Yamabe invariant $\si(M)$ 
is attained in
the conformal class of the hyperbolic metric $g_{\rm hyp}$, 
and the infimum in the definition of 
$\mu(H^3/\Gamma,[g_{\rm hyp}]$ is attained in $g_{\rm hyp}$.
More generally, it follows from Perelman's work on the Ricci flow that 
for $3$-manifolds with $\si(M)\leq 0$, the value of $\si(M)$ is determined
by the volume of the hyperblic pieces in the Thurston decomposition. 
We learned this from~\cite[Prop.~93.10 on page 2832]{kleiner.lott:08}, 
but ideas for this application go back to \cite{anderson:06}. 
In the case $\si(M)>0$, 
$n=3$, $M$ is the connected sum of copies of quotients 
$S^2\times S^1$ and of quotients
of $S^3$. For connected sums of copies of $S^2\times S^1$ we have 
$\si(M)=\si(S^3)$ but the precise value cannot be determined in most cases.
Using inverse mean curvature flow, the Yamabe invariants of $\mR P^3$ and 
some related spaces were determined in \cite{bray.neves:04} 
and \cite{akutagawa.neves:07}, e.g.\ $\sigma(\mathbb RP^3)= 2^{-2/3}\sigma(S^3)$.
This is indeed a special case of Schoen's conjecture explained below.

Also in higher dimensions the case of positive Yamabe invariant is 
notoriously difficult. 
In dimension $n\geq 5$ one does not know any $n$-dimensional 
manifold $M$ for which one can prove $0<\si(M)<\si(S^n)$. 
In dimensions $n\leq 4$ there are some examples for which exact 
calculations can be carried out, even in the positive case. 
The values for $\mathbb CP^2$ and some related spaces
were calculated by LeBrun \cite{lebrun:97} using Seiberg-Witten theory. 
The calculation then was simplified considerably by Gursky and 
LeBrun \cite{gursky.lebrun:98}. This proof no longer uses 
Seiberg-Witten theory, but only the index theorem by Atiyah and Singer. 
See also \cite{gursky.lebrun:98,lebrun:96,lebrun:99a} for related results.

Recently, surgery techniques known from the work of Gromov and Lawson could be refined to obtain explicit positive lower bounds
for the Yamabe invariant. 
Such bounds are easily obtained for \emph{special} manifolds, 
e.\ g.\ for manifolds
with Einstein metrics or connected sum of such manifolds. Namely, 
a theorem by Obata \cite{obata:71.72} states that 
the Einstein--Hilbert functional of an Einstein metric $g$ 
equals $\mu(M,[g])$, thus providing a lower bound for $\sigma(M)$. 
For instance,  if $M$ is $S^n$, $T^n$, $\mathbb RP^n$ or $\mathbb CP^n$, 
the canonical Einstein metrics provide lower bounds
for $\sigma(M)$.  However, obtaining 
a lower bound for $\si(M)$ is difficult in general
if $M$ carries a metric of positive scalar curvature but no Einstein metric. Using surgery theory, Petean and Yun have proven that $\si(M)\geq 0$ for all simply-connected manifolds of dimension at least $5$, see 
\cite{petean:03}, \cite{petean.yun:99}. 
Stronger results can be obtained with the surgery formula
developed in \cite{ammann.dahl.humbert:13b}. 
For example, it now can be shown, see \cite{ammann.dahl.humbert:13} and 
\cite{ammann.dahl.humbert:13c,ammann.dahl.humbert:15}, 
that simply-connected manifolds of dimension $5$ resp.\
$6$ satisfy $\si(M)\geq 45.1$ resp.\ $\si(M)\geq 49.9$.

In order to find more manifolds with $0<\si(M)<\si(S^n)$, it would be helpful
to prove the following conjecture by Schoen \cite{schoen:89}: it states 
that if $\Gamma$ is a finite group acting freely on $S^n$, then 
$\si(S^n/\Gamma)=\sigma(S^n)/(\# \Gamma)^{2/n}$. In particular, it would 
imply with \cite{ammann.dahl.humbert:13b} that for any odd $n\geq 5$ and
sufficiently large $k:=\# \Gamma$, every manifold $M$ representing the 
bordism class $[S^n/\Gamma]\in \Omega^{\rm spin}_n(B\Gamma)$ with maps 
inducing isomorphisms $\pi_1(M)\cong \Gamma\cong \pi_1(S^n/\Gamma)$ has 
$\si(M)=\sigma(S^n)/(\# \Gamma)^{2/n}$ an many more similar conlcusions.
Unfortunately, besides the trivial cases $\Gamma=\{\id\}$ or $n=2$, 
this conjecture has only been
proven in the particular case, when $n=3$ and $\#\Gamma=2$,
which is the determination of $\si(\mR P^3)$ 
by Bray and Neves in \cite{bray.neves:04} mentioned above.

%%%%%%%%%%%%%%%%%%%%%%%%%%%%%%%%%%%%%%%%%%%%%%%%%%%%%%%%%%%%%%%%%%%%%%%%%
\section{Overview over the $G$-equivariant Yamabe invariant}
%%%%%%%%%%%%%%%%%%%%%%%%%%%%%%%%%%%%%%%%%%%%%%%%%%%%%%%%%%%%%%%%%%%%%%%%%

In this paper, we study the $G$-equivariant setting by taking the supremum 
and the 
infimum only among $G$-invariant metrics and conformal classes 
where $G$ is a compact Lie group acting on $M$, see Section~\ref{def.G.yamabe}
for details. 
The associated invariants are called the  $G$-equivariant Yamabe constant or 
simply the $G$-Yamabe constant $\mu(M,[g]^G)$, 
and similarly the  $G$-(equivariant) Yamabe invariant $\si^G(M)$. 
To our knowledge the first reference for the $G$-equivariant Yamabe constant
$\mu(M,[g]^G)$
is  B\'erard Bergery \cite{berardbergery}. In particular, he formulated a
$G$-equivariant version of the Yamabe conjecture, 
which was the main subject of an article by Hebey and 
Vaugon \cite{hebey.vaugon:93} and by the second
author \cite{madani:10, madani:12}.
In general neither $\si^G(M)\leq \si(M)$ nor $\si^G(M)\geq \si(M)$, see Example~\ref{ex.diff}.

One motivation for the present article is to shed new light on 
Schoen's conjecture which is equivalent to saying 
$\sigma^\Gamma(S^n)=\sigma(S^n)$. 
A proof of Schoen's conjecture (or even partial results) 
would be very helpful, as it would provide 
interesting conclusions about the Yamabe invariant of non-simply connected 
manifolds. 
For example, if we were able to obtain an upper bound on 
$\si^\Gamma(S^n)$ which is uniform in $\Gamma$, then 
the Yamabe invariant would define interesting subgroups of
the spin bordism and oriented 
bordism groups, see \cite{ammann.dahl.humbert:13b}. 

The simplest case
of Schoen's conjecture is when $\mZ_k\subset S^1\subset \mC$ acts by complex 
multiplication on $S^3\subset \mC^2$, the so-called Hopf action.
As it seems currently 
out of reach to show $\sigma^{\mZ_k}(S^n)=\sigma(S^n)$ for $k>2$, 
we study the limit $k\to \infty$ instead, and this leads two the following two 
questions:
\begin{enumerate} 
\item Is $\si^{S^1}(S^3)=\si(S^3)$ true for the Hopf action? 
\item Assume that a sequence $(H_i)$ of subgroups of $G$ ``converges'' 
to $G$. Can we conclude that 
$\si^{H_i}(M)$ converges to $\si^G(M)$?  
\end{enumerate}

The answer to the first question is answered affirmatively by our main theorem.
More generally, we give an upper bound for the $S^1$-Yamabe invariant of any $3$-dimensional closed oriented manifold $M$, endowed with an $S^1$-action. This upper bound depends only on the following topological invariants: the first Chern class of the associated line bundle and the Euler--Poincar\'e characteristic of the quotient space (see Theorem \ref{mainthm} for the precise statement). 

Our strategy is to use the quotient space $M/S^1$. We  distinguish the following three cases, since the isotropy group of any point is either $\{\id\}$, $\mZ_k$ or $S^1$.  
If the $S^1$-action has at least one fixed point, a result of Hebey and Vaugon \cite{hebey.vaugon:93} implies that $\sigma^{S^1}(M)\leq \sigma(S^3)$. 
If the $S^1$-action is free, then $M/S^1$ is a smooth surface.  
In order to find an upper bound in this case, we mainly use O'Neill's 
formula relating the curvatures  of the total space 
and the base space of a Riemannian submersion and the Gau\ss--Bonnet theorem. 
In the last case, when the $S^1$-action is neither free nor has fixed points 
({\it i.e.} there exists at least one point with non-trivial finite isotropy 
group), the quotient space $M/S^1$ is a closed $2$-dimensional orbifold. 
We proceed as in the free action case, since the Gau\ss--Bonnet theorem 
still holds on orbifolds (see \cite{satake:57}). 
In the two latter cases, we find a topological upper bound of $\sigma^{S^1}(M)$,
which depends only on the Euler--Poincar\'e characteristic of $M/S^1$ and 
the first Chern number of the associated line bundle over $M/S^1$. 

The last part of the article partially answers the second question. 
More precisely the statement of Corollary~\ref{approx.cor} is
  $$\liminf_{i\to \infty}\si^{H_i}(M)\geq  \si^{G}(M).$$
Unfortunately, the corresponding  $\leq$-inequality which would allow the 
interesting application to Schoen's conjecture still fails due to lack of 
curvature control.

%%%%%%%%%%%%%%%%%%%%%%%%%%%%%%%%%%%%%%%%%%%
%%¤¤¤¤¤¤¤¤¤¤¤¤¤¤¤¤¤¤¤¤¤¤¤¤¤¤¤¤¤¤¤¤¤¤¤¤¤¤¤¤¤¤¤¤¤¤¤¤¤¤¤¤¤¤¤¤¤¤¤¤¤¤¤¤¤
\section{Preliminaries, definitions and some known results}
%%%%%%%%%%%%%%%%%%%%%%%%%%%%%%%%%%%%%%%%%%%

%%%%%%%%%%%%%%%%%%%%%%%%%%%%%%%%%%%%%%%%%%%%%%%%%%%%%%%%%%%%%%%%%%%%%%%%%%%%%%%%%%%%%%%%%%%%%
\subsection{Definition of the $G$-equivariant Yamabe invariant}\label{def.G.yamabe}
%%%%%%%%%%%%%%%%%%%%%%%%%%%%%%%%%%%%%%%%%%%%%%%%%%%%%%%%%%%%%%%%%%%%%%%%%%%%%%%%%%%%%%%%%%%%%

In this section we assume that a compact Lie group $G$ acts on the compact manifold $M$. All actions are supposed
to be smooth.

We recall that the Einstein-Hilbert functional of $M$ is given by 
\begin{equation}\label{def.J}
J(\ti g):=\frac{\int_M \scal_{\ti g} \di v_{\ti g}}{\vol(M,\ti g)^{\frac{n-2}{n}}}.
\end{equation}

We denote by $[\ti g]^{G}$ the set of $G$-invariant metrics in the conformal class of $\ti g$ and by $C^{G}(M)$ the set of all conformal classes containing at least one $G$-invariant metric.
\begin{definition}[$G$-Yamabe invariant]\label{defyamabeinv}
We define the $G$-equivariant Yamabe constant (or shorter: the $G$-Yamabe constant) by
\begin{equation}
\mu(M,[\ti g]^{G})=\inf_{g'\in [\ti g]^{G}} J(g')
\end{equation}
and the $G$-equivariant Yamabe invariant of $M$ 
(or shorter: the $G$-Yamabe invariant) by
$$\sigma^{G}(M)=\sup_{[\ti g]^{G}\in C^{G}(M)} \mu(M,[\ti g]^{G})\in (-\infty,\infty].$$
\end{definition}

\begin{remark}It follows for the solution of the equivariant Yamabe problem \cite{hebey.vaugon:93} that $\mu(M,[g]^G)>0$ if and only if $[g]$ contains 
a $G$-invariant metric of positive scalar curvature. It thus follows that 
$\si^G(M)>0$ holds if and only if $M$ carries a $G$-invariant metric 
of positive scalar curvature.
\end{remark}

The following examples show that both $\si^G(M)>\si(M)$ and $\si^G(M)<\si(M)$ may arise.

\begin{example}\label{ex.diff}
 $\si^G(M)\leq \si(M)$ nor $\si^G(M)\geq \si(M)$. For example if $S^1$ acts on the $S^1$ factor
of $N\times S^1$, $\dim N=n-1$, and if $N$ is a compact manifold carrying a metric of positive scalar curvature,
then $\si^{S^1}(N\times S^1)=\infty$, whereas $\si(N\times S^1)\leq \si(S^n)< \infty$. On the other hand, if $M$ is a 
simply-connected circle bundle over a K3-surface then $\si(M)>0$ but  $\si^{S^1}(M)=0$. Here $\si(M)>0$ follows 
classically from work by Gromov and Lawson and the fact that every compact simply connected spin $5$-manifolds
is a spin boundary. For $\si^{S^1}(M)\leq 0$ we refer to  
\cite[Theorem~6.2]{wiemeler:1506.04073}. The inequality $\si^{S^1}(M)\geq 0$ follows 
from~\eqref{oneillscal} by taking an $S^1$-invariant metric $g_1$ on $M$, we rescale 
the fibers by a factor $\ell>0$ and obtain $g_\ell$ and then  $\lim_{\ell\to 0}\mu(M,[g_\ell]^{S^1})=0$.
\end{example}

The situation changes in the non-positive case. In the case $\si^G(M)\leq 0$ we have $\mu(M,[g]^G)=
\mu(M,[g])$ for any $G$-invariant conformal class $[g]$, as the maximum principle implies that minimizers are 
unique up to a constant. Thus $\si(M)\geq \si^G(M)$ in this case.

%%%%%%%%%%%%%%%%%%%%%%%%%%%%%%%%%%%
\subsection{Some known results}
%%%%%%%%%%%%%%%%%%%%%%%%%%%%%%%%%%%%%%%%

In \cite{hebey.vaugon:93}, Hebey and Vaugon gave the following upper bound for the $G$-Yamabe constant:
\begin{proposition}[Hebey--Vaugon]\label{Prop-HV}
Let $M$ be an $n$-dimensional compact connected 
oriented manifold endowed with an action of a compact Lie group $G$,
admitting at least one orbit of finite cardinality. 
Then the following inequality holds:
\[\sigma^{G}(M)\leq \sigma(S^n)\bigl(\inf_{p\in M}\card(G\cdot p)\bigr)^{\frac 2 n}.\]
\end{proposition}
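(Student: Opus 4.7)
The plan is to extend Aubin's classical test-function argument proving $\mu(M,[g])\leq\si(S^n)$ to the equivariant setting, by placing a standard Aubin ``bubble'' at each point of a finite orbit of minimum cardinality. First, I would fix a conformal class $[g]^G\in C^G(M)$ and a $G$-invariant representative $g$. Set $k\definedas\inf_{p\in M}\card(G\cdot p)$, which is a positive integer by hypothesis, and choose $p_1\in M$ with $\card(G\cdot p_1)=k$; then $G\cdot p_1=\{p_1,\dots,p_k\}$ and the isotropy $G_{p_1}$ has index $k$ in $G$.

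Next I would build $G$-invariant test functions. For small $\ep>0$, let $u_\ep(x)=\ph(d_g(p_1,x))\,(\ep+d_g(p_1,x)^2)^{-(n-2)/2}$, with $\ph$ a smooth cut-off supported in a geodesic ball $B_r(p_1)$ of radius $r>0$ chosen so small that $B_r(p_1),\dots,B_r(p_k)$ are pairwise disjoint. Because $u_\ep$ depends only on $d_g(p_1,\cdot)$ and $G_{p_1}$ acts on $(M,g)$ by isometries fixing $p_1$, the function $u_\ep$ is automatically $G_{p_1}$-invariant. Pick $g_i\in G$ with $g_i\cdot p_1=p_i$ and set
\[
U_\ep\definedas\sum_{i=1}^k u_\ep\circ g_i^{-1}.
\]
The $G_{p_1}$-invariance of $u_\ep$ makes this independent of the choice of coset representatives, and $U_\ep$ is manifestly $G$-invariant with $k$ pairwise disjoint summand-supports.

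The third step is a computation of the Einstein--Hilbert functional. Writing $J$ in the standard conformal-Laplacian form with $a_n=4(n-1)/(n-2)$,
\[
J\bigl(u^{4/(n-2)}g\bigr)=\frac{\int_M\bigl(a_n|\na u|_g^2+\scal_g u^2\bigr)\di v_g}{\bigl(\int_M u^{2n/(n-2)}\di v_g\bigr)^{(n-2)/n}},
\]
the disjointness of the supports together with the isometry property of each $g_i$ produces an exact factor of $k$ in the numerator and inside the denominator, whence
\[
J\bigl(U_\ep^{4/(n-2)}g\bigr)=k^{\,1-(n-2)/n}\,J\bigl(u_\ep^{4/(n-2)}g\bigr)=k^{2/n}\,J\bigl(u_\ep^{4/(n-2)}g\bigr).
\]
Since $U_\ep^{4/(n-2)}g$ lies in $[g]^G$, one has $\mu(M,[g]^G)\leq k^{2/n}\,J(u_\ep^{4/(n-2)}g)$; by Aubin's classical test-function estimate $\limsup_{\ep\to 0}J(u_\ep^{4/(n-2)}g)\leq\si(S^n)$, and passing to the limit, then taking the supremum over $[g]^G\in C^G(M)$, yields $\si^G(M)\leq k^{2/n}\si(S^n)$.

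The only real obstacle is to reconcile $G$-invariance with sharpness of the Aubin constant, and this is resolved precisely by the radial-bubble trick: a radial function centered at $p_1$ is automatically invariant under every isometry fixing $p_1$, so no averaging (which would degrade the sharp constant) is needed, and the $G$-orbit sum only multiplies the sphere energy by the expected factor $k^{2/n}$.
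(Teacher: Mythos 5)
Your argument is correct and is essentially the proof given by Hebey and Vaugon; the paper itself only quotes the result from \cite{hebey.vaugon:93} without proof, so there is nothing different to compare against. One small imprecision: $U_\ep$ vanishes outside the union of the balls, so $U_\ep^{4/(n-2)}g$ is not literally a metric in $[\ti g]^{G}$; you should either note that $\mu(M,[g]^G)$ equals the infimum of the Yamabe functional over all nonnegative, not identically zero, $G$-invariant functions in $H^1(M)$, or replace $U_\ep$ by $U_\ep+\de$ with $\de>0$ and let $\de\to 0$ --- a routine fix that is also needed in the classical Aubin argument.
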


Other results in the literarture can be rephrased as follows.

\begin{proposition}[B\'erard Bergery, \cite{berardbergery}]
If $G$ is a compact Lie group whose connected component of the identity is non-ablian and which acts effectively on a closed manifold $M$ with cohomogeneity $2$. Then $\si^G(M)>0$.
\end{proposition}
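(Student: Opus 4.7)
The plan is to construct a $G$-invariant Riemannian metric on $M$ with positive scalar curvature, which by the remark following Definition~\ref{defyamabeinv} is equivalent to $\sigma^G(M)>0$. The main idea is to rescale the orbits of a distinguished normal compact semisimple subgroup of $G$ until their fibre scalar curvature dominates. Concretely, since $G_0$ is compact, connected and non-abelian, its commutator subgroup $S\definedas[G_0,G_0]$ is a non-trivial compact semisimple Lie group; since $S$ is characteristic in $G_0$ and $G_0$ is normal in $G$, the group $S$ is normal in $G$, so the family of $S$-orbits in $M$ is $G$-invariant. Effectiveness of the $G$-action forces the $S$-fixed set $M^{S}$ to be a proper closed $G$-invariant submanifold of $M$, whose complement $M^{*}\definedas M\setminus M^{S}$ is open, dense and $G$-invariant. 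On $M^{*}$ every $S$-orbit $S\cdot p\cong S/S_p$ is a positive-dimensional homogeneous space of a compact semisimple Lie group, and hence carries a normal homogeneous $S$-invariant metric of strictly positive scalar curvature, namely the one induced by $-B|_{\mathfrak{s}}$ with $B$ the Killing form of $S$.

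Choose a $G$-invariant metric $g$ on $M$ whose restriction to each $S$-orbit in $M^{*}$ coincides with such a normal homogeneous metric; this is possible by prescribing the vertical component along orbits via $-B|_{\mathfrak{s}}$ and extending $G$-equivariantly in the horizontal directions. Let $\mathcal V\subset TM^{*}$ be the $G$-invariant distribution tangent to the $S$-orbits and $\mathcal H\definedas\mathcal V^{\perp_{g}}$, and consider the canonical variation
\[
g_\ell\big|_{\mathcal V}=\ell^{2}\,g\big|_{\mathcal V},\qquad g_\ell\big|_{\mathcal H}=g\big|_{\mathcal H}.
\]
Normality of $S$ in $G$ makes the splitting $TM^{*}=\mathcal V\oplus\mathcal H$ $G$-invariant, so each $g_\ell$ remains $G$-invariant. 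Applying O'Neill's formula to the Riemannian submersion $M^{*}\to M^{*}/S$ under rescaling of the fibres yields on $M^{*}$
\[
\scal_{g_\ell}=\ell^{-2}\,\scal^{\mathrm{fib}}_{g}+O(1)\qquad(\ell\to 0),
\]
and since $\scal^{\mathrm{fib}}_{g}$ is uniformly bounded below by a positive constant on compact subsets of $M^{*}$, one obtains $\scal_{g_\ell}>0$ throughout any preassigned compact $K\subset M^{*}$ provided $\ell$ is small enough.

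The main obstacle is to carry out the construction smoothly and with positive scalar curvature across the singular locus $M\setminus M^{*}$, which contains the $S$-fixed set as well as every non-principal $G$-orbit. This is where the cohomogeneity-$2$ hypothesis is decisive: the quotient $M/G$ is a $2$-dimensional orbifold, so its singular stratum is a finite union of arcs and points, and the corresponding singular set in $M$ is a finite union of closed $G$-invariant submanifolds of codimension at least $2$. Around each non-principal $G$-orbit the equivariant slice theorem provides a $G$-invariant tubular neighbourhood of the form $G\times_{H}D$, where $D$ is a linear $H$-representation; on such a tube one builds an explicit $G$-invariant model metric in which the $S$-orbits again carry the normal homogeneous metric (so that the O'Neill estimate applies with the same shrinking parameter $\ell$) and the slice directions are essentially flat near the central orbit. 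The local models on these finitely many tubes and the canonical variation on the regular stratum are then assembled through a $G$-invariant partition of unity pulled back from $M/G$; taking $\ell$ uniformly small across the finitely many charts produces a smooth $G$-invariant metric on $M$ with positive scalar curvature everywhere, establishing $\sigma^{G}(M)>0$.
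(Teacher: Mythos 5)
The paper itself gives no proof of this proposition---it is quoted from B\'erard Bergery---so I can only judge your argument on its own terms. Your overall strategy (shrink the orbits of the semisimple part $S=[G_0,G_0]$ and invoke O'Neill's formula) is indeed the standard engine behind this kind of result, going back to Lawson--Yau, but the write-up has genuine gaps exactly where the real work lies. First, a smooth $G$-invariant metric on $M$ whose restriction to every $S$-orbit in $M^{*}$ is the fixed normal homogeneous metric induced by $-B|_{\mathfrak{s}}$ does not exist in general: that metric gives every positive-dimensional orbit a diameter bounded below by a universal constant, whereas for a smooth metric on $M$ the orbits must collapse as one approaches $M^{S}$ and the lower-dimensional $G$-orbits. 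So your background metric $g$ lives only on $M^{*}$ and degenerates at its frontier; consequently the ``$O(1)$'' remainder in the canonical-variation formula is not uniform, and you only get $\scal_{g_\ell}>0$ on a compact $K\subset M^{*}$ for $\ell=\ell(K)$, with no single $\ell$ working on all of $M^{*}$. Second, on $M^{S}$ itself there are no positive-dimensional $S$-orbits at all, so no amount of fibre shrinking produces positivity there; your sketch offers nothing in its place, yet this is precisely the hard case.

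Third, the final assembly step is not valid as stated: positive scalar curvature is not preserved under partition-of-unity interpolation of metrics, since derivatives of the cutoff functions enter the curvature with uncontrolled sign---this is exactly why Gromov--Lawson-type constructions require explicit bending and warping arguments rather than cutoffs. Relatedly, the cohomogeneity-$2$ hypothesis never genuinely enters your argument except through the remark that the singular set is small; as written, the same sketch would ``prove'' the statement in arbitrary cohomogeneity. What the hypothesis actually buys is that the slice at a non-principal orbit is $2$-dimensional, so a tube around each singular orbit is a disc bundle over a homogeneous space on which one can write down an explicit $G$-invariant doubly warped model metric, compute its scalar curvature by hand, and match it (in $C^{1}$, not merely via a partition of unity) to the shrunken-fibre metric on the principal stratum. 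Supplying those model computations and the matching is the actual content of the proof, and it is missing here.
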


\begin{proposition}[B\'erard Bergery \cite{berardbergery} $n=3$, Wiemeler \cite{wiemeler:1305.2288} all $n$]
Let an abelian Lie group $G$ act effectively on a closed connected manifold $M$ with a fix point component of codimension $2$. Then $\si^G(M)>0$.
\end{proposition}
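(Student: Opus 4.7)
The equivalence ``$\sigma^G(M) > 0$ if and only if $M$ admits a $G$-invariant metric of positive scalar curvature'', noted in the remark following Definition~\ref{defyamabeinv}, reduces the proposition to producing such a $G$-invariant PSC metric. I would do so by a torpedo-type construction on the normal bundle of the codimension-$2$ fixed point component $F$.

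The key observation is that the $G$-action on each normal disk $\nu_p F \cong \mathbb{R}^2$ (for $p \in F$) factors through a subgroup of $O(2)$, so any rotationally symmetric metric $dr^2 + f(r)^2 \, d\theta^2$ on such a disk is automatically $G$-invariant. Starting from any $G$-invariant background metric $g_0$ on $M$ (produced by averaging), the normal exponential map identifies a $G$-invariant tubular neighborhood of $F$ with the $\epsilon$-disk bundle $D_\epsilon(\nu F)$. I would replace the fiberwise metric on $D_\epsilon(\nu F)$ by the warped metric $dr^2 + f_\epsilon(r)^2 \, d\theta^2$, where $f_\epsilon(r) = \epsilon \sin(r/\epsilon)$ for $r$ close to $0$ (ensuring smooth closure at $F$) and $f_\epsilon$ is then bent, keeping $f_\epsilon'' \le 0$ throughout, to match the boundary value imposed by $g_0$ on $\partial D_\epsilon(\nu F)$; outside the tube one retains $g_0$.

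The fiber contribution to the scalar curvature of the modified metric is $-2 f_\epsilon''/f_\epsilon$, which equals $2/\epsilon^2$ on the spherical cap region near $F$, while the scalar curvature of $g_0|_F$ and the curvature of the normal connection of $\nu(F)$ contribute only $O(1)$ terms independent of $\epsilon$. For $\epsilon$ small enough the modified metric therefore has strictly positive scalar curvature on the whole tube, and combined with $g_0$ outside the tube one obtains a global $G$-invariant PSC metric, giving $\sigma^G(M) > 0$.

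The main technical obstacle is the interpolation annulus where $f_\epsilon$ bends from the spherical cap profile to the value imposed by $g_0$: the bending must keep $f_\epsilon'' \le 0$ (so as not to destroy the positive fiber contribution) while also respecting the non-product horizontal geometry of the ambient metric. This is handled by the classical Gromov--Lawson bending lemma, executed equivariantly because the construction depends only on the $G$-invariant radial coordinate $r$ and on rotations in $\theta$; any bounded negative scalar curvature inherited from $g_0$ on the annulus is absorbed by further shrinking $\epsilon$. The hypothesis of codimension exactly $2$ is essential: it is this codimension that allows a one-variable warping of the fiber metric to cap off the normal bundle smoothly while producing unbounded positive fiber scalar curvature of order $1/\epsilon^2$.
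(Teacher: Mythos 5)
The paper does not prove this proposition itself --- it is quoted from B\'erard Bergery ($n=3$) and Wiemeler (general $n$) --- so there is no internal proof to compare against; I therefore assess your argument on its own. Your reduction via the remark after Definition~\ref{defyamabeinv} (namely $\sigma^G(M)>0$ iff $M$ carries a $G$-invariant metric of positive scalar curvature) is correct, and the rotationally symmetric torpedo deformation on the normal disk bundle $D_\epsilon(\nu F)$ is a sound way to produce scalar curvature of order $1/\epsilon^2$ \emph{on the tube}: the $O(2)$-symmetry gives $G$-invariance, the O'Neill term $-\tfrac{f^2}{4}|\Omega|^2$ is even $O(\epsilon^2)$, and the bending with $f''\le 0$ keeps the fiber contribution nonnegative on the transition annulus.

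The gap is in the final gluing step: outside the tube you ``retain $g_0$'', but $g_0$ is an arbitrary averaged metric whose scalar curvature on $M\setminus D_\epsilon(\nu F)$ may be strictly negative, and shrinking $\epsilon$ does nothing there. A purely local modification near $F$ cannot establish this global statement. Concretely, let $M=\Sigma\times S^2$ with $\Sigma$ a closed hyperbolic surface and $S^1$ rotating the sphere factor; the fixed set consists of two codimension-$2$ components $\Sigma\times\{N\}$ and $\Sigma\times\{S\}$. Taking $g_0$ to be the product of the hyperbolic metric with the round metric of radius $2$, one has $\scal_{g_0}=-2+\tfrac12<0$ everywhere; your construction alters the metric only near the two polar copies of $\Sigma$ and leaves the scalar curvature negative near the equator, even though the conclusion of the proposition holds for this $M$ (shrink the \emph{whole} $S^2$ factor). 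The known proofs are accordingly global: for $n=3$ B\'erard Bergery uses the structure theory of $S^1$-actions on $3$-manifolds, and Wiemeler decomposes $M$ as the union of $D(\nu F)$ and its complement $C$ and runs an equivariant gluing/deformation argument that exploits the action of the normal circle on $C$ as well. Handling $M\setminus D(\nu F)$ is the actual content of the theorem, and your argument does not address it.
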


More recent progress about the question whether  $\si^G(M)>0$ can be found in \cite{hanke:08} and 
\cite{wiemeler:1506.04073}.

%%¤¤¤¤¤¤¤¤¤¤¤¤¤¤¤¤¤¤¤¤¤¤¤¤¤¤¤¤¤¤¤¤¤¤¤¤¤¤¤¤¤¤¤¤¤¤¤¤¤¤¤¤¤¤¤¤¤¤¤¤¤¤¤¤¤
\subsection{Scalar curvature of $S^1$-bundles}\label{sec.s1.curv}

Let $M^n$ be a compact oriented and connected manifold, 
which is an $S^1$-bundle over $N$, 
let $\pi: M\to N$ be the projection, let~$\tilde g$ be an $S^1$-invariant 
metric on~$M$ and~$g$ its projection under $\pi$ on $N$. Let $K$ denote the tangent vector field induced by the $S^1$-action and let $\ell$ be its length 
(with respect to $\ti g$) and $e_0:=\frac{K}{\ell}$. We define the $(2,1)$-tensor fields $A$ and $T$ on $M$ as in \cite[9.C.]{besse:87}, {\it i.e.} for all vector fields $U,V$ on $M$: 
\[A_U V=\rH \nabla_{\rH U}\rV V +\rV \nabla_{\rH U}\rH V,\]
\[T_U V=\rH \nabla_{\rV U}\rV V +\rV \nabla_{\rV U}\rH V,\]
where $\rH$ and $\rV$ denote the horizontal, resp. vertical part of a vector field. The tensor $A$ measures the non-integrability of the horizontal distribution, whereas $T$ is essentially the second fundamental form of the $S^1$-orbits.
{\it Cf.} \cite[9.37]{besse:87}, the following formula relating the scalar curvatures of $(M,\ti g)$ and $(N,g)$ holds:
$$\witi\scal = \scal- |A|^2 - |T|^2-|T_{e_0}e_0|^2-2\check{\de} (T_{e_0}e_0),  $$
where $\check\de$ is the codifferential in the horizontal direction.
 For any vector fields $X, Y$ on $N$ with horizontal lifts $\witi X, \witi Y$, the vertical part of $[\witi X,\witi Y]$ equals $\Omega(X,Y)K:=2A_{\witi X}{\witi Y}$.
We compute:
$$|A|^2= \frac{\ell^2}4 |\Omega|^2,\quad T_K\witi X= \na_K {\witi X}=\na_{\witi X} K= \frac{\pa_{\witi X}\ell}\ell K, \quad T_{e_0}e_0= -\frac{\grad\ell}\ell=-\grad\log \ell,$$
which yield
  \begin{equation}\label{oneillscal}
 \scal_{\ti g}
      = \scal_g - \frac{\ell^2}4 |\Omega|^2 - 2 \frac{|d\ell|^2}{\ell^2}
   + 2\De_g(\log \ell)
= \scal_g- \frac{\ell^2}4 |\Omega|^2 + 2\frac{\De_g \ell}{\ell},
  \end{equation}
where $\Delta_g$ is the Laplacian of the base $(N,g)$.

%%%%%%%%%%%%%%%%%%%%%%%%%%%%%%%%%%%%%%%%%%%%%%%%%%%%%%%%%%%%%%%%%
\subsection{An analytical ingredient}
%%%%%%%%%%%%%%%%%%%%%%%%%%%%%%%%%%%%%%%%%%%%%%%%%%%%%%%%%%%%%%%%

We recall that the following classical result still holds on orbifolds:
\begin{lemma}\label{lem laplace}
Let $(\Sigma, g)$ be a closed $2$-dimensional orbifold. Let $f\in C^k(\Sigma)$ be a function with $\int_\Sigma f dv_g=0$. Then there exists a solution $u\in C^{k+2}(\Sigma)$  of the equation $\Delta_g u=f$, which is unique up to an additive constant.
\end{lemma}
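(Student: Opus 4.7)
The plan is to reduce to the classical elliptic theory on a disk in $\mR^2$ by working in orbifold uniformizing charts, treating the Laplacian on $\Sigma$ as a self-adjoint elliptic Fredholm operator whose kernel consists of the constants. Concretely, cover $\Sigma$ by finitely many uniformizing charts $(\widetilde U_\alpha, \Gamma_\alpha, \varphi_\alpha)$, where $\widetilde U_\alpha\subset\mR^2$ is open, $\Gamma_\alpha$ is a finite group acting effectively on $\widetilde U_\alpha$ by isometries of the lifted smooth metric $\widetilde g_\alpha$, and $\varphi_\alpha\colon \widetilde U_\alpha/\Gamma_\alpha\to\Sigma$ is a homeomorphism onto an open subset. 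Smooth (or $C^k$) functions on $\Sigma$ correspond to $\Gamma_\alpha$-invariant functions of the same regularity on each $\widetilde U_\alpha$, and the orbifold Laplacian lifts in every chart to $\Delta_{\widetilde g_\alpha}$, which commutes with the $\Gamma_\alpha$-action. Gluing the corresponding spaces of $\Gamma_\alpha$-invariant Sobolev functions via a smooth partition of unity subordinate to the charts defines Hilbert spaces $H^s(\Sigma)$ on which $\Delta_g\colon H^2(\Sigma)\to L^2(\Sigma)$ is bounded and symmetric for the $L^2$-pairing induced by $dv_g$; integration by parts holds on $\Sigma$ because it holds $\Gamma_\alpha$-equivariantly in each chart.

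For existence, I would minimize the energy
\[
E(u)=\tfrac{1}{2}\int_\Sigma|du|_g^2\,dv_g - \int_\Sigma fu\,dv_g
\]
over the affine subspace $\{u\in H^1(\Sigma):\int_\Sigma u\,dv_g=0\}$. The hypothesis $\int_\Sigma f\,dv_g=0$ makes the linear term insensitive to adding constants, and a Poincar\'e inequality on $\Sigma$, obtained by the standard contradiction argument from the compactness of $H^1(\Sigma)\hookrightarrow L^2(\Sigma)$ (which transfers from the charts via the partition of unity), makes $E$ coercive and strictly convex. The unique minimizer is a weak solution of $\Delta_g u=f$. Equivalently, $\Delta_g$ is self-adjoint Fredholm with kernel $\mR$, so its image is exactly the $L^2$-orthogonal complement of the constants.

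For regularity I would argue chart by chart. Pulled back to $\widetilde U_\alpha$, the weak solution $u$ becomes a $\Gamma_\alpha$-invariant distributional solution $\widetilde u$ of $\Delta_{\widetilde g_\alpha}\widetilde u=\widetilde f$ with $\widetilde f\in C^k(\widetilde U_\alpha)$. Classical interior elliptic regularity for the Laplacian of a smooth metric on a Euclidean open set (Schauder/bootstrap) yields $\widetilde u\in C^{k+2}(\widetilde U_\alpha)$, and $\Gamma_\alpha$-invariance is preserved because the estimates are intrinsic. Piecing the charts together gives $u\in C^{k+2}(\Sigma)$. Uniqueness up to a constant is standard: any harmonic $v\in H^1(\Sigma)$ satisfies $\int_\Sigma|dv|_g^2\,dv_g=-\int_\Sigma v\,\Delta_g v\,dv_g=0$, so $dv=0$ and $v$ is constant.

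The only step requiring a little care is checking that the familiar functional-analytic ingredients (Rellich compactness, the $L^2$-adjoint, and interior Schauder estimates) survive the passage through the orbifold charts, but since everything is set up $\Gamma_\alpha$-equivariantly and each $\Gamma_\alpha$ is finite, this is routine rather than an actual obstacle; the argument is, in spirit, the one for closed surfaces, applied to the equivariant subspaces of the local covers.
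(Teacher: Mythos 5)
Your proof is correct and is precisely the argument the paper has in mind: the paper gives no details at all, remarking only that the proof is ``analogous to the classical case'', and your chart-by-chart equivariant transfer of the variational existence argument, Rellich/Poincar\'e compactness, and interior elliptic regularity is the standard way to make that remark precise. The only (shared) imprecision is in the regularity bookkeeping --- Schauder theory needs $f\in C^{k,\alpha}$ to conclude $u\in C^{k+2,\alpha}$, so the bare $C^k\Rightarrow C^{k+2}$ claim should strictly be read in H\"older classes --- but this is an issue with the lemma as stated in the paper rather than with your argument, and it is harmless for the smooth $f$ to which the lemma is actually applied.
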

The proof of Lemma~\ref{lem laplace} is analogously to the classical case.

%%%%%%%%%%%%%%%%%%%%%%%%%%%%%%%%%%%%%%%%%%%%%%%%%%%
\section{The $S^1$-Yamabe invariant}
%%%%%%%%%%%%%%%%%%%%%%%%%%%%%%%%%%%%%%%%%%%%%%%%%%%%%

In this section we always have $G=S^1$, and we use the notation $N=M/S^1$ similar to Section~\ref{sec.s1.curv}.
Here $N$ may have singular points, i.e. orbifold points or boundary points.

%%%%%%%%%%%%%%%%%%%%%%%%%%%%%%%%%%%%%%%%%%%%%%%%%%%%%%%%
\subsection{Yamabe functional on $S^1$-bundles}
%%%%%%%%%%%%%%%%%%%%%%%%%%%%%%%%%%%%%%%%%%%%%%%%%%%%%%%%

If the action of $S^1$ is free, then 
by \eqref{oneillscal}, we obtain from \eqref{def.J}:
\begin{equation}\label{Yam func}
  J(\tilde g)=\frac{2\pi\int_{N}(\scal_g-\frac{\ell^2}{4}|\Omega|_g^2)\ell \di v_g}{(\int_{N} 2\pi \ell \di v_g)^{\frac{n-2}{n}}},
\end{equation}
since the length of any fibre is $2\pi\ell$.   
The Yamabe functional of $(M,\ti g)$ is the restriction of the Einstein-Hilbert functional to the conformal class of $\ti g$.
It can be equivalently written as follows:
\begin{equation}\label{Yamfunctional}
  J(u^{\frac{4}{n-2}}\tilde g)=\frac{ \int_{N} 2\pi \ell\bigl(\frac{4(n-1)}{n-2} |du|_g^2 
  + \Scal_{\ti g} u^2\bigr) \di v_g }
{\bigl(\int_N2\pi\ell u^{\frac{2n}{n-2}}\di v_g\bigr)^{\frac{n-2}{n}}},
\end{equation}
where $\scal_{\ti g}$ is given by \eqref{oneillscal}.

%%%%%%%%%%%%%%%%%%%%%%%%%%%%%%%%%%%%%
\subsection{Classification of $3$-manifolds with $\si^{S^1}(M)>0$}\label{pos.mu.s1}
%%%%%%%%%%%%%%%%%%%%%%%%%%%%%%%%%%%%%%%%%%%%%%%%%%%
It is completely understood, under which condition there is an $S^1$-invariant metric of positive sclar curvature,
in other words, when  $\si^{S^1}(M)>0$.

\begin{theorem}[{\cite[Theorem 12.1]{berardbergery}}]
Let $M$ be a compact connected $3$-dimensional manifold with a 
smooth $S^1$-action on~$M$.
\begin{enumerate}[label=\alph*)]
\item If the action has a fixed point, then $\si^{S^1}(M)>0$.
\item If the action has no fixed point, then $\si^{S^1}(M)>0$ if and only if
$M$ is a finite quotient of $S^3$ or of $S^2\times S^1$.
\end{enumerate}
\end{theorem}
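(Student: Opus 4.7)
My plan is to split the statement into its two parts and treat them separately, noting throughout that we may assume the $S^1$-action is effective: if the ineffectivity kernel is a finite cyclic subgroup of $S^1$, factoring it out changes neither the statement nor the conclusion, and a genuinely trivial $S^1$-action is excluded by any natural reading of the hypothesis.

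For~(a), I would use the slice theorem: at a fixed point $p$, a neighborhood is $S^1$-equivariantly identified with a neighborhood of the origin in $T_pM \cong \mR^3$ equipped with an orthogonal $S^1$-action. In odd dimension~$3$ such a representation decomposes as $\mR \oplus V$ with $V$ a real $2$-dimensional nontrivial rotation module, so the fixed set of $S^1$ in $T_pM$ is $1$-dimensional and the fixed-point component through $p$ has codimension~$2$. The B\'erard Bergery/Wiemeler proposition cited above then immediately yields $\si^{S^1}(M) > 0$.

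For~(b), with no fixed points all isotropy groups are finite cyclic, so $N := M/S^1$ is a closed connected $2$-orbifold and $\pi\colon M \to N$ is a Seifert fibration. The ``if'' direction I would handle by explicit construction: on $S^3$ with the Hopf action, the round metric is $S^1$-invariant with positive scalar curvature; on $S^2 \times S^1$ with $S^1$ acting on the second factor, the product metric has positive scalar curvature; in both cases such a metric descends to any finite quotient commuting with the $S^1$-action. For the ``only if'' direction, I would start with an invariant metric $\ti g$ of positive scalar curvature, apply formula~\eqref{oneillscal} to its projection $g$ and fiber length $\ell$, multiply by $\ell$, integrate over $N$, and use Lemma~\ref{lem laplace} (which gives $\int_N \Delta_g \ell\,dv_g = 0$) to obtain
\begin{equation*}
0 < \int_M \scal_{\ti g}\, dv_{\ti g} = 2\pi \int_N \ell\,\scal_g\, dv_g - \frac{\pi}{2}\int_N \ell^3|\Omega|^2\, dv_g,
\end{equation*}
hence $\int_N \ell\,\scal_g\, dv_g > 0$. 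Once $\chi(N) > 0$ is extracted (via the orbifold Gauss-Bonnet formula $\int_N \scal_g\,dv_g = 4\pi\chi(N)$), the orbifold $N$ is spherical and the Seifert classification identifies $M$ as a finite quotient of $S^3$ (if the rational Euler number of the Seifert fibration is nonzero) or of $S^2 \times S^1$ (if it vanishes).

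The hard part will be the passage from the $\ell$-weighted inequality to the unweighted $\chi(N) > 0$: a priori the weight $\ell$ may concentrate on the positively-curved part of $N$ and ``hide'' a nonpositive Euler characteristic. My plan to circumvent this is to renormalize $\ti g$ within its $S^1$-invariant conformal class so that $\ell$ becomes (approximately) constant along $N$, controlling how $\scal_{\ti g}$ transforms; a fallback would be to invoke an external enlargeability or $\widehat A$-genus obstruction to rule out positive scalar curvature on any Seifert $3$-manifold whose base orbifold is Euclidean or hyperbolic, thereby forcing $\chi(N) > 0$ directly.
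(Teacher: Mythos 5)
The paper itself offers no proof of this theorem: it is quoted verbatim from B\'erard Bergery and used as a black box, the paper only observing afterwards that the characterization ``$\si^{S^1}(M)>0$ iff $\chi(\Sigma)>0$'' is recovered by Theorem~\ref{mainthm}. Measured against the machinery the paper does develop, your argument for part (a) is correct (a nontrivial orthogonal $S^1$-representation on $\mR^3$ always has exactly a one-dimensional fixed subspace, so fixed components have codimension $2$ and the quoted B\'erard Bergery/Wiemeler proposition applies), and your ``only if'' argument for (b) is essentially the paper's proof of Theorem~\ref{mainthm}~(iii). The step you flag as ``the hard part'' is resolved precisely by the renormalization you propose, and you should note that no pointwise control of the transformed scalar curvature is needed: choose the representative of $[\ti g]^{S^1}$ with $\ell$ constant; its Einstein--Hilbert functional is an upper bound for $\mu(M,[\ti g]^{S^1})$, and by \eqref{oneillscal} and orbifold Gau\ss--Bonnet this bound is $\leq 0$ whenever $\chi(\Sigma)\leq 0$, contradicting the fact (the remark after Definition~\ref{defyamabeinv}, i.e.\ the solution of the equivariant Yamabe problem by Hebey--Vaugon) that a conformal class containing an invariant metric of positive scalar curvature has $\mu(M,[\ti g]^{S^1})>0$. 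Your ``fallback'' via enlargeability is also viable but unnecessary.

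The genuine gap is in the ``if'' direction of (b). You construct an invariant PSC metric only for two model actions (the Hopf action on $S^3$, rotation of the $S^1$-factor of $S^2\times S^1$) and for quotients compatible with these, but the theorem asserts $\si^{S^1}(M)>0$ for an \emph{arbitrary} fixed-point-free $S^1$-action on such an $M$, and these actions are in general not descents of your two models: lens spaces carry all the actions $\phi_{m_1,m_2}$, prism manifolds fiber over $\mR P^2(n)$, etc., so the round or product metric need not be invariant for the given action. The repair stays inside the paper's toolkit: for the given action, the base orbifold $\Sigma=M/S^1$ has orbifold fundamental group a quotient of $\pi_1(M)$ by the (normal) subgroup generated by a regular fibre, hence finite when $M$ is a finite quotient of $S^3$ or of $S^2\times S^1$, which forces $\chi(\Sigma)>0$ (bad orbifolds included); then Lemma~\ref{lem laplace} yields an orbifold metric of positive curvature on $\Sigma$, and shrinking the fibres in \eqref{oneillscal} produces an invariant PSC metric on $M$ --- exactly the positivity argument in the proof of Theorem~\ref{mainthm}~(i). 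Without some such argument the ``if'' direction is not established.
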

Note that every finite quotient of $S^3$ by a freely acting 
subgroup of $\SO(4)$ admits a 
non-trivial $S^1$-action \cite[Sec.~6, Theorem~5]{orlik:72}.

%\newpage
%%%%%%%%%%%%%%%%%%%%%%%%%%%%%%%%%%%%%%%%%%%
\subsection{Oriented $3$-manifolds}\label{subsec dim3}
%%%%%%%%%%%%%%%%%%%%%%%%%%%%%%%%%%%%%%%%%
From now on, we assume that~$M$ is a $3$-dimensional compact oriented connected manifold endowed with an $S^1$-action.
If this $S^1$-action has at least one fixed point, 
Proposition~\ref{Prop-HV} implies that the Yamabe invariant of $S^3$ 
is an upper bound for the $S^1$-Yamabe invariant: 
$\sigma^{S^1}(M)\leq \sigma(S^3)$.

We want to determine an upper bound for the $S^1$-Yamabe invariant 
in the complementary case, i.e.\ we consider $S^1$-actions without 
fixed points. This implies that~$M$ is an $S^1$-principal (orbi)bundle 
over $\Sigma:=M/S^1$, which is a $2$-dimensional orbifold  
(a smooth surface, if the action is free). 
As usually, we use the correspondence between $S^1$-principal bundles and complex line bundles defined by $$\Sigma\mapsto L:=\Sigma\times_{S^1}\mC.$$

We write $c_1(L,\Sigma):=\langle c_1(L), [\Sigma] \rangle\in\mQ$, 
where $c_1(L)\in H^2(\Sigma,\mQ)$ is the first rational Chern class of $L$ in the orbifold sense. Let $\chi(\Sigma)=c_1(T\Sigma,\Sigma)$ be the (orbifold) Euler-Poincar\'e characteristic of $\Sigma$.

We are now ready to state our main result:

\begin{theorem}\label{mainthm}
Let $M$ be a $3$-dimensional compact connected 
oriented manifold endowed with an $S^1$-action without fixed points. With the above notation, the following assertions hold:
\begin{enumerate}[label=\roman*)]
\item \label{item i}If $\chi(\Sigma)>0$ and $c_1(L,\Sigma)\neq 0$, then  
$$0<\sigma^{S^1}(M)\leq \sigma(S^3)\left(\frac{\chi(\Sigma)}{2\sqrt{|c_1(L,\Sigma)}|}\right)^{\frac{4}{3}}.$$
\item If $\chi(\Sigma)>0$ and $c_1(L,\Sigma)=0$, then $\sigma^{S^1}(M)=\infty$.
\item If $\chi(\Sigma)\leq 0$, then $\sigma^{S^1}(M)=0$.
\end{enumerate}
\end{theorem}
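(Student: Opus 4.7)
The plan is to handle the three cases via a single normalization and computation: for every $S^1$-invariant conformal class $[\tilde g]^{S^1}$ on $M$, I would choose the representative $\tilde g$ with fiber length $\ell \equiv 1$. Starting from any $\tilde g_0$ with fiber length $\ell_0 = |K|_{\tilde g_0} > 0$ (positive since the action has no fixed points), the conformal change $u = \ell_0^{-1/2}$ produces $\tilde g := u^4 \tilde g_0$ with $\ell \equiv 1$. In this normalization, formula~\eqref{oneillscal} reads $\scal_{\tilde g} = \scal_g - \tfrac{1}{4}|\Omega|^2$ on the $2$-orbifold $\Sigma = M/S^1$, and the total volume is $\vol(M,\tilde g) = 2\pi V$ with $V = \vol(\Sigma,g)$. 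Combining with $\int_\Sigma \scal_g\, dv_g = 4\pi \chi(\Sigma)$ from the orbifold Gauss--Bonnet theorem (Satake), and using $\mu(M,[\tilde g]^{S^1}) \leq J(\tilde g)$, I obtain the universal estimate
\[ \mu(M,[\tilde g]^{S^1}) \;\leq\; J(\tilde g) \;=\; \frac{8\pi^2 \chi(\Sigma) - \frac{\pi}{2} \int_\Sigma |\Omega|^2\, dv_g}{(2\pi V)^{1/3}}. \]

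For part (i), the argument proceeds in two further steps. First, since $\Omega$ represents (up to sign) $2\pi c_1(L,\Sigma)$ via Chern--Weil, Cauchy--Schwarz applied to the $2$-form $\Omega$ on $(\Sigma,g)$ gives $\int_\Sigma |\Omega|^2 dv_g \geq 8\pi^2 c_1(L,\Sigma)^2/V$, with equality precisely when $\Omega$ is a constant multiple of $dv_g$; inserted above this bounds $J(\tilde g)$ by a function of $V$ alone with the topology fixed. Second, a direct calculus maximization over $V > 0$ shows the maximum is attained at $V_\ast = 2\pi c_1(L,\Sigma)^2 / \chi(\Sigma)$ and evaluates to $\sigma(S^3)\bigl(\chi(\Sigma)/(2\sqrt{|c_1(L,\Sigma)|})\bigr)^{4/3}$, using $\sigma(S^3) = 6(2\pi^2)^{2/3}$. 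Since this bound is uniform over conformal classes, $\sigma^{S^1}(M)$ satisfies the stated inequality. The strict positivity $\sigma^{S^1}(M)>0$ follows from the B\'erard Bergery classification in Section~\ref{pos.mu.s1}: $\chi(\Sigma)>0$ together with $c_1(L,\Sigma)\neq 0$ forces $M$ to be a finite quotient of $S^3$, which admits $S^1$-invariant metrics of positive scalar curvature.

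For part (ii), $c_1(L,\Sigma)=0$ allows one to choose a representative whose associated connection is flat ($\Omega \equiv 0$), and the problem effectively reduces to the product situation of Example~\ref{ex.diff}: shrinking the base metric on $\Sigma$ produces a family of $S^1$-invariant conformal classes whose Yamabe constants diverge, so $\sigma^{S^1}(M)=+\infty$. For part (iii), the formula above with $\chi(\Sigma)\leq 0$ and $\int_\Sigma |\Omega|^2 dv_g \geq 0$ yields $J(\tilde g)\leq 0$ in every $S^1$-invariant conformal class, hence $\sigma^{S^1}(M) \leq 0$; the reverse inequality $\sigma^{S^1}(M) \geq 0$ is the fiber-collapse construction recalled in Example~\ref{ex.diff}, where $\ell \to 0$ drives $\mu$ to $0$ along a family of classes.

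The main obstacle I expect lies in the orbifold bookkeeping: Satake's orbifold Gauss--Bonnet theorem must be applied correctly at the cone points of $\Sigma$; the Chern number $c_1(L,\Sigma)$ must be interpreted as a rational orbifold invariant computed through the orbifold integration $\int_\Sigma \Omega = \pm 2\pi c_1(L,\Sigma)$; and both the $\ell\equiv 1$ normalization and the Cauchy--Schwarz step must respect the orbifold measure. A useful sanity check is the Hopf bundle $S^3 \to S^2$, where $\chi = 2$, $|c_1| = 1$, $V_\ast = \pi = \vol(S^2(1/2))$, and the bound collapses to $\sigma^{S^1}(S^3) \leq \sigma(S^3)$; combined with the $S^1$-invariance of the round metric, this yields the main theorem announced in the abstract.
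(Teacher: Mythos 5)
Your treatment of the upper bound in (i) and of (iii) is essentially the paper's argument with the two optimizations interchanged: the paper fixes the base metric $g$, maximizes the expression \eqref{yamabe-M} over the constant fiber length $\ell$, and only afterwards applies Cauchy--Schwarz together with $\|\Omega\|_1\geq 2\sqrt{2}\pi|c_1(L,\Sigma)|$; you normalize $\ell\equiv 1$, apply Cauchy--Schwarz first to obtain a bound depending only on $V=\vol(\Sigma,g)$, and then maximize over $V$. Both orders are legitimate, since in each case one merely bounds a specific number by a supremum over a parameter that need not be realized geometrically, and your constants (including $V_\ast=2\pi c_1^2/\chi$ and the value $6\cdot 4^{-1/3}\pi^{4/3}\chi^{4/3}|c_1|^{-2/3}$) agree with the paper's once the convention $|dv_g|=\sqrt{2}$ implicit in \eqref{oneillscal} is used, as you do.

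Two places are thinner than they should be. For the positivity $\sigma^{S^1}(M)>0$ in (i) you invoke B\'erard Bergery's classification, which requires the unproved (though standard) Seifert-fibration fact that $\chi(\Sigma)>0$ and $c_1(L,\Sigma)\neq 0$ force $M$ to be a finite quotient of $S^3$; the paper instead gives a short self-contained construction: solve $\Delta_g u=\frac{2\pi\chi(\Sigma)}{\vol(\Sigma,g)}-\frac{1}{2}\scal_g$ on the orbifold $\Sigma$ (Lemma \ref{lem laplace}) to produce a base metric of positive curvature, then choose the constant fiber length small enough that the term $-\frac{\ell^2}{4}|\Omega|^2$ in \eqref{oneillscal} does not destroy positivity. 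You should either supply the Seifert argument or switch to this construction. In (ii), a flat connection does exist when $c_1(L,\Sigma)=0$ (the underlying space of $\Sigma$ is $S^2$ when $\chi(\Sigma)>0$, so $\int_\Sigma\Omega=0$ forces $\Omega$ exact), but when $\Sigma$ has cone points $M$ is only a flat bundle, not a global product; the paper passes to the $S^1$-equivariant finite cover $S^1\times\widetilde\Sigma\to M$ of degree $d$ and compares Yamabe constants up to the factor $d^{2/3}$ before letting the fiber length diverge. Your phrase that the problem \emph{effectively reduces to the product situation} needs exactly this covering step to become an argument.
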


In particular, $\sigma^{S^1}(M)$ is positive if and only if $\chi(\Sigma)$ is positive. This coincides with the characterization in~\cite{berardbergery},
as explained in Section~\ref{pos.mu.s1}.

\begin{proof}[Proof of Theorem \ref{mainthm}]
Let $[\tilde g]^{S^1}\in\mathrm{Conf}^{S^1}(M)$ be the class of 
$S^1$-invariant metrics conformal to $\tilde g$ on $M$. 
Without loss of generality, we assume that the length of the 
vector field $K$ generating the $S^1$-action $\ell:=|K|_{\tilde g}$ is 
constant (otherwise we take a different representant of the 
class $[\tilde g]^{S^1}$). Let $g$ be the projection of the 
metric~$\tilde g$ on $\Sigma$, so that $(M,\tilde g)\to (\Sigma,g)$ is a 
Riemannian submersion. Since $\ell$ is constant, 
the O'Neill formula \eqref{oneillscal} yields that 
$\scal_{\tilde g}=\scal_g-\frac{\ell^2}{4}|\Omega|_g^2$. Using the Gau\ss--Bonnet theorem, we compute the 
Yamabe functional as follows:
\begin{equation}\label{yamabe-M}
 \begin{split}
  J(\tilde g)
  &=\frac{2\pi\int_{\Sigma}(\scal_g-\frac{\ell^2}{4}|\Omega|_g^2)\ell \di v_g}{(2\pi)^{\frac{1}{3}}(\int_{\Sigma} \ell \di v_g)^{\frac{1}{3}}}\\
  &=(2\pi)^{\frac{2}{3}}\frac{\ell(\int_{\Sigma}\scal_g\di v_g)-\frac{\ell^3}{4}(\int_{\Sigma}|\Omega|^2_{g}\di v_g)}{\ell^{\frac{1}{3}}(\int_{\Sigma}\di v_g)^{\frac{1}{3}}}\\
  &=\left(\frac{\pi^2}
  {16\mathrm{vol}(\Sigma, g)}\right)^{\frac{1}{3}}
 \bigl(16\pi\chi(\Sigma)
  \ell^{\frac{2}{3}}-\|\Omega\|^2_2\ell^{\frac{8}{3}}\bigr).
 \end{split}
\end{equation}
If we have $\chi(\Sigma)\geq 0$ and  $\|\Omega\|_{2}>0$, 
then the maximal value of this expression
as a function in $\ell$ is attained for 
$\ell=\sqrt{4\pi\chi(\Sigma)}\|\Omega\|_{2}^{-1}$ and its maximal value equals 
$$3 \cdot 2^{\frac{4}{3}}\pi^2(\mathrm{vol}(\Sigma, g))^{-\frac{1}{3}
}\chi(\Sigma)^{\frac{4}{3}}\|\Omega\|_2^{-\frac{2}{3}}.$$ We now consider cases i) to iii) in the theorem.
%\begin{enumerate}[label=\roman*), leftmargin=0.2cm]
\begin{enumerate}[label=\roman*), leftmargin=0.5cm]
\item Note that in this case  $c_1(L,\Sigma)\neq 0$ implies $\|\Omega\|_{2}>0$. 
By the Cauchy--Schwarz inequality, it further follows that
\begin{equation}\label{ineq-M}
   J(\tilde g)\leq 3 \cdot 2^{\frac{4}{3}}\pi^2\chi(\Sigma)^{\frac{4}{3}}\|\Omega\|_1^{-\frac{2}{3}}.
\end{equation}
On the other hand, we claim that $\|\Omega\|_{1}\geq 2\sqrt{2}\pi |c_1(L,\Sigma)|$, since  
\[\frac{1}{\sqrt{2}}\int_{\Sigma} |\Omega|_g \di v_g \geq \biggl|\int_{\Sigma} \Omega\biggr|=2\pi |c_1(L,\Sigma)|,\]
where the volume form $\di v_g$ has length $\sqrt 2$, by convention.
Using $\si(S^3)=3\cdot 2^{5/3}\cdot \pi^{4/3}$ it follows that 
$J(\tilde g)\leq \sigma(S^3)\chi(\Sigma)^{\frac{4}{3}}|4 c_1(L,\Sigma)|^{-\frac{2}{3}}$, for all $S^1$-invariant metrics $\tilde g$ on $M$ with $\ell=|K|_{\tilde g}$ constant. This yields 
$$ \mu(M,[\ti g]^{S^1})\leq \sigma(S^3)\chi(\Sigma)^{\frac{4}{3}}|4 c_1(L,\Sigma)|^{-\frac{2}{3}},$$
for all $S^1$-invariant conformal classes $[\tilde g]^{S^1}\in\mathrm{Conf}^{S^1}(M)$. 

Now, we show that $\sigma^{S^1}(M)$ is positive. 
The function $f:=\frac{2\pi}{vol(\Sigma,g)}\chi(\Sigma)-\frac{1}{2}\scal_g$ has zero average over $\Sigma$. By Lemma \ref{lem laplace}, there exists a solution $u$ of the equation $\Delta_g u=f$. Therefore the scalar curvature of 
$g_u:=e^{2u}g$ is given by
\begin{equation}
\scal_{g_u}= 2e^{-2u}(\Delta_gu+\frac{1}{2}\scal_g)=\frac{4\pi}{\vol(\Sigma,g)}\chi(\Sigma)e^{-2u}.
\end{equation}
Hence, the scalar curvature of $g_u$ is positive.
Using the identity \eqref{oneillscal} and choosing the length of the $S^1$-fibre constant and sufficiently small, we construct an $S^1$-invariant metric $\tilde g_u$ (which is not necessarily conformal to $\tilde g$) with positive scalar curvature. Therefore, the Yamabe constant $\mu(M,[\ti g_u]^{S^1})$ is positive, so $\sigma^{S^1}(M)>0$.

\item If $c_1(L,\Sigma)=0$, then there exists an $S^1$-equivariant finite covering $S^1\times \witi \Sigma$ of $M$ of degree $d$, where $\witi\Sigma$ is a smooth compact surface finitely covering $\Sigma$ (for more details, see \emph{e.g.} \cite[Lemma 3.7]{scott:83}). 
Since \mbox{$\chi(\Sigma)>0$}, we see that $\witi \Sigma$ is diffeomorphic to $S^2$. As in the previous case, we know that a metric of positive Gau{\ss} curvature exists on $\Sigma$. The product metric $\ti g_{\ell}$ of its lift to $\witi \Sigma$ with a rescaled standard metric on $S^1$ of length $2\pi\ell$ is invariant under the deck transformation group of $S^1\times \witi \Sigma\to M$. As this deck transformation group commutes with the $S^1$-action, $\ti g_{\ell}$ descends to an $S^1$-invariant metric $g_{\ell}$ on $M$. From \eqref{Yamfunctional}, we get $\mu(S^1\times \witi \Sigma, [\ti g_{\ell}]^{S^1})=\mu(S^1\times \witi \Sigma, [\ti g_{1}]^{S^1})\ell^{2/3}$. Obviously we have $\mu(S^1\times \witi \Sigma, [\ti g_{\ell}]^{S^1})\leq d^{2/3}\mu(M, [g_{\ell}]^{S^1})$.
Then $\mu(M, [g_{\ell}]^{S^1})$ converges to $\infty$ for $\ell \to \infty$, which implies the statement.
\item Assume
that the Euler-Poincar\'e characteristic of $\Sigma$ is nonpositive. 
By \eqref{yamabe-M}, we have
$$\mu(M,[\hat g]^{S^1})\leq J(\hat\ell^{-2} \hat g)\leq 2(2\pi)^{\frac{5}{3}}
\chi(\Sigma)\vol(\Sigma, \hat g_\Sigma)^{-\frac{1}{3}}\leq 0,$$ 
for any $S^1$-invariant Riemannian metric  $\hat g$ on $M$, where $\hat\ell:=|K|_{\hat g}$. This yields 
$\sigma^{S^1}(M)\leq 0$.
Moreover,  if we fix a Riemannian metric $\hat g_\Sigma$ on $\Sigma$, we 
define $(\hat g_j)$ to be a sequence of metrics on $M$ with 
constant functions $\hat \ell_j:=|K|_{\hat g_j}\leq 1$  converging to $0$ and 
$\pi^*\hat g_\Sigma=\hat g_j$. From \eqref{Yamfunctional} and using the H\"older inequality, we obtain 
 $$\mu(M,[\hat g_j]^{S^1})\geq - (2\pi\hat\ell_j)^{\frac{2}{3}}
(\|\Scal_{\hat g_{\Sigma}}\|_{\frac{3}{2}}+\frac{1}{4}\|\Omega\|^2_{3}).$$
Hence, when $j$ goes to $+\infty$, it follows that  $\sigma^{S^1}(M)\geq 0$.
We conclude that $\sigma^{S^1}(M)=0$.
\end{enumerate}
\end{proof}

%%%%%%%%%%%%%%%%%%%%%%%%%%%%%%%%%%%%%%%%%%%%%%%%%%%%%%%%%%%%%%%%%%%%%%%%%
\subsection{The case of $S^3$}\label{subsec S3}
%%%%%%%%%%%%%%%%%%%%%%%%%%%%%%%%%%%%%%%%%%%%%%%%%%%%%%%%%%%%%%%%%%%%%%%%%
We now consider the special case of $S^1$-actions on $S^3\subset\mC^2$.

For $m_1,m_2\in\mN$ assumed to be relatively prime as long as $m_1m_2\neq 0$, we define
\begin{equation}
\phi_{m_1,m_2}:S^1\to \mathrm{Diff}(S^3), \quad \phi_{m_1,m_2}(x)(z_1,z_2):=(x^{m_1} z_1, x^{m_2} z_2).
\end{equation}
With this notation, the Hopf action of $S^1$ on $S^3$ corresponds to $\phi_{1,1}$. These are the only possible smooth $S^1$-actions on $S^3$ up to diffeomorphisms (see {\it e.g.} \cite{orlik:72}). Note that such an action has fixed points if and only if $m_1m_2=0$. 

\begin{theorem}\label{S3}
For the Hopf action of $S^1$ on $S^3$ it holds: $$\sigma^{S^1}(S^3)=\sigma(S^3).$$
Moreover, the $S^1$-equivariant Yamabe invariant of any $S^1$-action $\phi_{m_1,m_2}$ on $S^3$ satisfies the following:
\begin{enumerate}[label=\roman*)]
\item If $m_1m_2=0$, then 
$\sigma^{S^1}(S^3)=\sigma(S^3)=6\cdot 2^{\frac{2}{3}}\cdot\pi^{\frac{4}{3}}$.
\item\label{item S3} If $m_1m_2\neq 0$, then 
\[\sigma(S^3)\leq \sigma^{S^1}(S^3)\leq \sigma(S^3)\biggl(\frac{m_1+m_2}{2\sqrt{m_1m_2}}\biggr)^{\frac{4}{3}}.\]
\end{enumerate} 
\end{theorem}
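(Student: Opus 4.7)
The plan is to reduce Theorem \ref{S3} to three ingredients already present in the paper: Aubin's identification $\sigma(S^3)=\mu(S^3,[g_{st}])=J(g_{st})$ via the fact that $g_{st}$ is Einstein (hence a Yamabe minimiser in its conformal class by Obata); Proposition \ref{Prop-HV}, which handles case~(i); and Theorem \ref{mainthm}\ref{item i}, which handles case~(ii).

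For the lower bound common to (i) and (ii), each action $\phi_{m_1,m_2}$ is by unitary transformations of $\mC^2$, so the round metric $g_{st}$ is $S^1$-invariant for every $(m_1,m_2)$. Since $[g_{st}]^{S^1}\subset [g_{st}]$,
\[
\sigma(S^3)=\mu(S^3,[g_{st}])\leq \mu(S^3,[g_{st}]^{S^1})\leq J(g_{st})=\sigma(S^3),
\]
so $\mu(S^3,[g_{st}]^{S^1})=\sigma(S^3)$ and therefore $\sigma^{S^1}(S^3)\geq \sigma(S^3)$ regardless of $(m_1,m_2)$.

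For the upper bound in case~(i), the condition $m_1m_2=0$ means, up to a swap of coordinates, that $m_2=0$; then $\phi_{m_1,m_2}$ pointwise fixes the circle $\{(z_1,0):|z_1|=1\}\subset S^3$, so the minimal orbit cardinality equals $1$. Proposition \ref{Prop-HV} in dimension $3$ then gives $\sigma^{S^1}(S^3)\leq \sigma(S^3)$, yielding equality and the stated value $6\cdot 2^{2/3}\pi^{4/3}$.

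For the upper bound in case~(ii), coprimality of $m_1,m_2>0$ makes $\phi_{m_1,m_2}$ fixed-point free and effective, with exceptional orbits $\{z_2=0\}$ and $\{z_1=0\}$ of isotropies $\mZ_{m_1}$ and $\mZ_{m_2}$. Hence $\Sigma:=S^3/S^1$ is the weighted projective orbifold $\mC\mP^1(m_1,m_2)$, a topological $2$-sphere with cone points of orders $m_1$ and $m_2$, whose orbifold Euler characteristic is
\[
\chi(\Sigma)=2-\left(1-\frac{1}{m_1}\right)-\left(1-\frac{1}{m_2}\right)=\frac{m_1+m_2}{m_1 m_2}>0.
\]
A standard Seifert-fibration computation for $S^3\to \mC\mP^1(m_1,m_2)$ (see e.g.\ \cite{orlik:72}, or integrate the curvature of the connection one-form dual to $K/|K|^2_{g_{st}}$ over $\Sigma$) gives $|c_1(L,\Sigma)|=1/(m_1 m_2)$. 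Substituting both values into Theorem \ref{mainthm}\ref{item i},
\[
\sigma^{S^1}(S^3)\leq \sigma(S^3)\left(\frac{(m_1+m_2)/(m_1 m_2)}{2\sqrt{1/(m_1 m_2)}}\right)^{4/3}=\sigma(S^3)\left(\frac{m_1+m_2}{2\sqrt{m_1 m_2}}\right)^{4/3},
\]
which is the claimed bound. Specialising to the Hopf case $m_1=m_2=1$ one has $\chi(\Sigma)=2$ and $|c_1(L,\Sigma)|=1$, so the upper bound collapses to $\sigma(S^3)$ and meets the lower bound, proving $\sigma^{S^1}(S^3)=\sigma(S^3)$ for the Hopf action. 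The only step that is not a direct unpacking of earlier results in the paper is the Chern-number identity $|c_1(L,\Sigma)|=1/(m_1 m_2)$; this is the main point that must be verified in detail, and it is classical in the theory of Seifert fibrations of $S^3$.
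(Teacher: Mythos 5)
Your proposal is correct and follows essentially the same route as the paper: the $S^1$-invariance of the round metric gives the lower bound $\sigma^{S^1}(S^3)\geq\sigma(S^3)$, Proposition~\ref{Prop-HV} handles the case $m_1m_2=0$, and Theorem~\ref{mainthm}~i) together with $\chi(\Sigma)=\frac{1}{m_1}+\frac{1}{m_2}$ and $|c_1(L,\Sigma)|=\frac{1}{m_1m_2}$ (which the paper verifies by an explicit computation in the Appendix, while you invoke the cone-point formula and Seifert-fibration theory) gives the upper bound when $m_1m_2\neq 0$. The only slip is cosmetic: for $m_2=0$ the pointwise fixed circle is $\{(0,z_2):|z_2|=1\}$ rather than $\{(z_1,0):|z_1|=1\}$, which does not affect the argument.
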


\begin{proof}%[Proof of Proposition~\ref{S3}]
Let us first remark that, since the standard metric $g_{\mathrm{st}}$ on $S^3$ is $S^1$-invariant for any $S^1$-action $\phi_{m_1,m_2}$, it follows that $\mu(S^3,[g_{\mathrm{st}}]^{S^1})\geq \mu(S^3,[g_{\mathrm{st}}])=\sigma(S^3)$. Hence, we obtain the inequality: 
$\sigma^{S^1}(S^3)\geq \sigma(S^3)$.

\begin{enumerate}[label=\it\roman*), leftmargin=0.4cm]
\item If $m_1m_2=0$, then the $S^1$-action has fixed points and by Proposition \ref{Prop-HV} we also obtain the reverse inequality: $\sigma^{S^1}(S^3)\leq \sigma(S^3)$.
\item If $m_1m_2\neq 0$, then the quotient orbifold is the so-called $1$-dimensional weighted projective space denoted by $\mC P^1(m_1,m_2)$. In order to use the upper bound provided by Theorem \ref{mainthm}, we need to compute $\chi(\mC P^1(m_1,m_2))$ and $c_1(L,\mC P^1(m_1,m_2))$. Using the Seifert invariants of $S^1$-bundles (see {\it e.g.} \cite{orlik:72}, \cite{scott:83}), one obtains: $\chi(\mC P^1(m_1,m_2))=\frac{1}{m_1}+\frac{1}{m_2}$ and $|c_1(L,\mC P^1(m_1,m_2))|=\frac{1}{m_1m_2}$.  Alternatively, we give in the Appendix an explicit geometric computation of this topological invariants. Substituting these values in Theorem \ref{mainthm}, \ref{item i}, we obtain the desired inequality.
\end{enumerate}
The first statement of the theorem follows from $\ref{item S3}$ for $m_1=m_2=1$.
\end{proof}

\section{Convergence result}

\begin{definition}
Let $G$ be a Lie group, and let $(H_i)_{i\in \mN}$ be a sequence of subgroups.
We say that $h\in G$ is an accumulation point for  $(H_i)_{i\in \mN}$ if there 
is a sequence $(h_i) _{i\in \mN}$ with $h_i\in H_i$ and $h_i\to h$. The set of accumulation points is a closed subgroup of~$G$.
We say that $(H_i)_{i\in \mN}$ is accumulating, if every element of $G$ is an accumulation point.
\end{definition}

\begin{proposition}
Assume that a compact Lie group $G$ acts on a closed manifold~$M$.
Let $(H_i)_{i\in \mN}$ be an accumulating sequence of subgroups of $G$.
Then for any $G$-equivariant conformal class $[g]$ we get
  $$\lim_{i\to \infty}\mu(M,[g]^{H_i})= \mu(M,[g]^{G}).$$
\end{proposition}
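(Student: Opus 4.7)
The plan is to establish two matching inequalities. The easy direction, $\limsup_i \mu(M,[g]^{H_i}) \le \mu(M,[g]^G)$, is immediate: every $G$-invariant metric is $H_i$-invariant, so $[g]^G\subset [g]^{H_i}$ and passing to the infimum over a larger set can only decrease the value. By replacing $g$ with a $G$-invariant representative of its conformal class (possible since $[g]$ is $G$-equivariant), one may assume $g$ itself is $G$-invariant.

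For the reverse inequality $\liminf_i \mu(M,[g]^{H_i}) \ge \mu(M,[g]^G)$, the non-positive case $\mu(M,[g]^G)\le 0$ is trivial, as $\mu(M,[g]^G)=\mu(M,[g])=\mu(M,[g]^{H_i})$ for every $i$ (see Section~\ref{def.G.yamabe}). In the positive case, invoke Hebey--Vaugon to obtain a smooth positive $H_i$-invariant minimizer $u_i$ for $\mu_i := \mu(M,[g]^{H_i})$, normalized by $\|u_i\|_{L^{2n/(n-2)}}=1$. The bound $\mu_i\le\mu(M,[g]^G)$ yields uniform $H^1$-control on $(u_i)$, so after extracting a subsequence, $u_i\rightharpoonup u_\infty$ weakly in $H^1$ and strongly in $L^p$ for $p<2n/(n-2)$.

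The accumulating hypothesis enters via a uniform approximation: compactness of $G$ upgrades pointwise accumulation of $(H_i)$ to uniform $\epsilon_i$-density in $G$ for some $\epsilon_i\to 0$. Since the $G$-action is isometric for $g$, for any $u\in H^1(M)$ and $g_0\in G$ one has $\|u\circ g_0-u\circ h\|_{L^2}\le d(g_0,h)\|\nabla u\|_{L^2}$ where $d$ is a bi-invariant metric on $G$; choosing $h\in H_i$ within $\epsilon_i$ of $g_0$ and using $H_i$-invariance of $u_i$ yields $\|u_i\circ g_0-u_i\|_{L^2}\le C\epsilon_i$ uniformly in $g_0\in G$. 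Averaging over $G$ with normalized Haar measure $dh$, the $G$-projection $(P_G u)(p):=\int_G u(g\cdot p)\,dh(g)$ then satisfies $\|u_i-P_G u_i\|_{L^2}\to 0$, so the limit $u_\infty=P_G u_\infty$ is $G$-invariant, and the same argument shows the weak-$*$ limit of the critical measures $u_i^{2n/(n-2)}\di v_g$ is $G$-invariant.

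The concluding step is concentration-compactness. Write the weak-$*$ limit as $u_\infty^{2n/(n-2)}\di v_g+\sum_j \nu_j$, where each $\nu_j$ is supported on a $G$-orbit $\mathcal{O}_j$ of finite cardinality $k_j\ge \inf_{p\in M}\card(G\cdot p)$ and has total mass $m_j$ equidistributed over $\mathcal{O}_j$; with $\alpha=\int u_\infty^{2n/(n-2)}\di v_g$ one has $\alpha+\sum_j m_j=1$. Since $u_\infty$ is a $G$-invariant test function, $\int\bigl(\tfrac{4(n-1)}{n-2}|\nabla u_\infty|^2+\Scal_g u_\infty^2\bigr)\di v_g\ge \mu(M,[g]^G)\,\alpha^{(n-2)/n}$, while a local Aubin/Sobolev bound yields a concentration contribution of at least $\sigma(S^n)k_j^{2/n}m_j^{(n-2)/n}$ from $\mathcal{O}_j$. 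Combining with Proposition~\ref{Prop-HV} in the form $\mu(M,[g]^G)\le \sigma(S^n)k_j^{2/n}$ and the subadditivity $\alpha^{(n-2)/n}+\sum_j m_j^{(n-2)/n}\ge (\alpha+\sum_j m_j)^{(n-2)/n}=1$ of the concave map $x\mapsto x^{(n-2)/n}$ yields $\liminf_i\mu_i\ge\mu(M,[g]^G)$. The main obstacle is the concentration-compactness analysis: verifying that the singular part of the limit measure concentrates precisely on finite $G$-orbits, and matching the per-orbit Aubin cost $\sigma(S^n)k_j^{2/n}m_j^{(n-2)/n}$ with the Hebey--Vaugon bound.
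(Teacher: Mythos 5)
Your skeleton matches the paper's up to the endgame: the easy inequality $\mu(M,[g]^{H_i})\leq\mu(M,[g]^G)$, the trivial non-positive case, the Hebey--Vaugon minimizers $u_i$ with $\|u_i\|_{2n/(n-2)}=1$ and their weak $H^1$-limit are all there. Where you diverge is in how the lower bound is extracted. The paper argues by contradiction: assuming $\bar\mu=\lim\mu_i<\mu$, it shows via the orbit-counting claim $\#(H_i\cdot p)\to\#(G\cdot p)$ and a blow-up analysis that the $u_i$ cannot concentrate at all (any concentration on a $\ti k$-point orbit would force $\bar\mu\geq\si(S^n)\ti k^{2/n}$, contradicting $\bar\mu<\si(S^n)k^{2/n}$), hence are $L^\infty$-bounded, converge in $C^2$, and produce a genuine $G$-invariant minimizer with energy $\bar\mu$, whence $\mu\leq\bar\mu$. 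You instead run a direct Lions concentration-compactness accounting, allowing concentration but charging each finite $G$-orbit its Aubin cost $\si(S^n)k_j^{2/n}m_j^{(n-2)/n}$ and closing with Proposition~\ref{Prop-HV} and subadditivity of $x\mapsto x^{(n-2)/n}$. Both routes rest on the same analytic core; yours avoids producing a minimizer for the $G$-problem (and your averaging argument for the $G$-invariance of $u_\infty$ and of the limit measure, using uniform $\ep_i$-density of $H_i$ in $G$, is a clean replacement for the paper's $C^2$-convergence argument), at the price of more measure-theoretic bookkeeping.

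There is, however, one genuine gap: you invoke Hebey--Vaugon unconditionally to obtain the minimizers $u_i$. Their existence theorem requires the strict inequality $\mu(M,[g]^{H_i})<\si(S^n)\bigl(\inf_{p\in M}\card(H_i\cdot p)\bigr)^{2/n}$; whether this strict inequality always holds is precisely the Hebey--Vaugon conjecture, which is not known in general, so it cannot be taken for granted. The paper is careful here: under its contradiction hypothesis it has $\mu_i\leq\ti\mu<\mu\leq\si(S^n)k^{2/n}$, and combines this with the claim $\#(H_i\cdot p)\to\#(G\cdot p)$ to verify the strict inequality for $i$ large. In your direct setup the verification is not automatic, and you must either reproduce this orbit-counting step or dispose separately of the subsequences along which equality $\mu_i=\si(S^n)\bigl(\min_p\#(H_i\cdot p)\bigr)^{2/n}$ holds; in that boundary case $\min_p\#(H_i\cdot p)\to k$ forces $\mu_i\to\si(S^n)k^{2/n}\geq\mu$ by Proposition~\ref{Prop-HV}, so the conclusion survives, but the argument as written skips this and the orbit-counting lemma it depends on.
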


\begin{proof}
We distinguish the following two cases: 
\begin{itemize}[leftmargin=0.2cm]
\item If the (non equivariant) Yamabe constant satisfies $\mu(M,[g])\leq 0$, then there is, up to a multiplicative constant, 
a unique metric $u_{\infty}^{\frac{4}{n-2}}g$ of constant scalar curvature 
and $u_\infty$ is $G$-invariant. 
%Thus, there exist  $c_i\in(0,+\infty)$, such that $u_i=c_iu_\infty$ for all $i\in\mN$. 
This implies $\mu(M,[g])=\mu(M,[g]^{G})=\mu(M, [g]^{H_i})$. 

\item  Now we assume that the Yamabe constant satisfies $\mu(M,[g])>0$. Set $\mu_i:=\mu(M,[g]^{H_i})$, $\mu:=\mu(M,[g]^{G})$. Obviously $\mu_i\leq \mu$. After passing 
to a subsequence we can assume that $\mu_i$ converges to a number 
$\bar\mu\leq \mu$ and it remains to show that $\bar\mu<\mu$ leads to a contradiction.  For an orbit $O$ we will use the convention that $\# O$ takes values in $\mN\cup\{\infty\}$, i.e. we do not distinguish between
infinite cardinalities. 
We claim that $\lim_{i\to\infty} \#(H_i\cdot p)=\#(G\cdot p)$, for any $p\in M$.
The inequality $\#(H_i\cdot p)\leq\#(G\cdot p)$ is obvious as $H_i\subset G$.

To prove the claim in the case $\#(G\cdot p)<\infty$, we choose pairwise disjoint neighborhoods of all the $G$-orbit points of $p$ and for $i$ sufficiently large, we find in each such neighborhood an element of the $H_i$-orbit of $p$, showing that $ \#(H_i\cdot p)\geq\#(G\cdot p)$. If $\#(G\cdot p)=\infty$, then we apply the previous argument to a finite subset of the $G$-orbit of $p$ and then let its cardinality converge to $\infty$. This shows that $\lim_{i\to\infty} \#(H_i\cdot p)=\infty$.

Without loss of generality, we assume that
$\mu_i\leq \tilde\mu:=(\mu+\bar\mu)/2<\mu$.
Let~$k$ be the cardinality of the smallest $G$-orbit, again sloppily 
written as $\infty$ in the case that $k$ is infinite. Then by Proposition \ref{Prop-HV}, we have $\mu\leq \sigma(S^n)k^{2/n}.$
This implies  \mbox{$\mu_i\leq \tilde\mu  <\sigma(S^n) k^{2/n}$}. Hence, by the above claim, we obtain the following inequality $\mu_i\leq \tilde\mu  <\sigma(S^n) (\min_{p\in M} \#(H_i\cdot p))^{2/n}$, for $i\geq i_0$, where $i_0$ is sufficiently large.
By Hebey and Vaugon~\cite{hebey.vaugon:93}, it follows that, for $i\geq i_0$, 
there exists  a sequence $(u_i^{\frac{4}{n-2}}g)_{i\in\mN}$ of $H_i$-invariant 
metrics, which minimizes the functional $J$ among all $H_i$-invariant metrics in $[g]$.
Furthermore $u_i$ is a positive smooth $H_i$-invariant  
solution of the Yamabe equation, and we may assume  $\|u_i\|_{\frac{2n}{n-2}}=1$. 
The sequence $(u_i)_{i\in\mN}$ is uniformly bounded in $H^1(M)$. 
Hence there exists a nonnegative function $u_\infty\in H^1(M)$, such that $(u_i)_{i\in\mN}$ converges strongly in $L^q(M)$, for $1\leq q<\frac{2n}{n-2}$, and weakly in $H^1(M)$ to $u_\infty$. 
%By classical variational arguments, it follows that $u_\infty$ is a 
%weak solution of the Yamabe equation. In order to conclude, one has to rule 
%out that $u_\infty$ is the trivial solution. 
%Indeed, if $u_\infty$ would converge to zero almost everywhere,
We now claim, that $u_i$ is bounded in the $L^\infty$-norm. Suppose that it is not bounded. Then we find a sequence of $x_i\in M$ such that $u_i(x_i)\to \infty$, and after taking a subsequence $x_i$ converges to a point $\bar x$. For any point $g\bar x$ in its orbit, there is a sequence of $h_i\in H_i$ with 
$h_ix_i\to g\bar x$, $u_i(h_ix_i)\to \infty$. If the orbit $G\cdot \bar x$ contains at least $\ti k$ points,  
then we can do classical blowup-analysis in $\ti k$ points, which would  yield $\bar\mu\geq  \sigma(S^n) \ti k^{2/n}$ (see for example \cite[Chapter 6.5.]{aubin:98}). This implies $\bar\mu\geq  \sigma(S^n) k^{2/n}$ which contradicts
$\bar  \mu<\sigma(S^n) k^{2/n}$. We obtain the claim, i.e.\ the boundedness
of $u_i$ in $L^\infty$. By a standard bootstrap argument this yields
the boundedness of $u_i$ in $C^{2,\al}$ for $0<\alpha<1$, and thus 
$u_i$ converges to $u_\infty$ in $C^2$. It follows that $u_\infty$ is a smooth, positive $G$-invariant solution of the Yamabe equation, 
with $\|u_\infty\|_{\frac{2n}{n-2}}=1$ and $J(u_\infty^{\frac{4}{n-2}}g)=\bar\mu$. 
Thus $\mu\leq\bar{\mu}$.
\end{itemize}
\end{proof}

\begin{corollary}\label{approx.cor}
Assume that a compact Lie group $G$ acts on a closed manifold $M$.
Let $(H_i)_{i\in \mN}$ be an accumulating sequence of subgroups of $G$.
Then 
  $$\liminf_{i\to \infty}\si^{H_i}(M)\geq  \si^{G}(M).$$
\end{corollary}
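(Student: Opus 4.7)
The plan is to deduce the corollary directly from the preceding proposition by taking a supremum over $G$-invariant conformal classes. The key observation is that any $G$-invariant conformal class is automatically $H_i$-invariant for every $i$, since $H_i \subset G$. Thus, for each $[g] \in C^G(M)$, one has $[g] \in C^{H_i}(M)$, and by definition of the $H_i$-equivariant Yamabe invariant,
\[
\sigma^{H_i}(M) \;\geq\; \mu(M, [g]^{H_i}).
\]

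Next I would take the liminf in $i$ on both sides. By the preceding proposition (applied to the $G$-invariant class $[g]$), the right-hand side converges as $i \to \infty$, with limit $\mu(M,[g]^G)$. Hence
\[
\liminf_{i\to\infty} \sigma^{H_i}(M) \;\geq\; \lim_{i\to\infty} \mu(M,[g]^{H_i}) \;=\; \mu(M,[g]^G).
\]
Since this inequality holds for every $[g] \in C^G(M)$, I would finally take the supremum of the right-hand side over $C^G(M)$ to conclude
\[
\liminf_{i\to\infty} \sigma^{H_i}(M) \;\geq\; \sup_{[g]\in C^G(M)} \mu(M,[g]^G) \;=\; \sigma^G(M).
\]

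Essentially no obstacle remains: the nontrivial analytic content (the Hebey--Vaugon blow-up analysis, the uniform $L^\infty$ estimates on the minimizers, and the convergence of orbit cardinalities) has already been absorbed into the proposition, and the corollary is a purely formal supremum argument on top of it. The only point worth flagging is that one cannot replace $\liminf$ by $\limsup$ or $\lim$ here, because the reverse inequality would require extracting, from an arbitrary $H_i$-invariant conformal class, a $G$-invariant one with comparable Yamabe constant --- and as the authors note in the introduction, the lack of curvature control prevents this.
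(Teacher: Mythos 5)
Your proposal is correct and matches the paper's intent exactly: the paper gives no written proof (just a \qed), treating the corollary as an immediate formal consequence of the preceding proposition, which is precisely the supremum argument you carry out. The only ingredients are that every $G$-invariant conformal class is $H_i$-invariant (so $\sigma^{H_i}(M)\geq \mu(M,[g]^{H_i})$) and the convergence $\mu(M,[g]^{H_i})\to\mu(M,[g]^G)$ from the proposition; both are used correctly.
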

\qed
\appendix
\section{}

\subsection{Computation of $\boldsymbol{c_1(L, \mC P^1(m_1,m_2))}$ }
%%%%%%%%%%%%%%%%%%%%%%%%%%%%%%%%%%%%%%%%%

We consider  the action of $S^1$ on $S^3\subset \mC^2$ given by $$\phi_{m_1,m_2}:e^{i\theta}\mapsto \bigl((z_1,z_2)\mapsto (e^{im_1\theta}z_1,
e^{im_2\theta}z_2)\bigr),$$ 
where $m_1$ and $m_2$ are two positive relatively prime  integers. Let $\pi: S^3\to S^3/S^1$ denote the projection, where the quotient $S^3/S^1=:\mC P^1(m_1,m_2)$ is the one dimensional weighted projective space. We consider the round metric of $S^3$ induced by the standard metric on $\mR^4\simeq \mC^2$: $\<(z_1,z_2),(w_1,w_2)\>=\textrm{Re}(z_1\bar w_1+z_2\bar w_2)$ . The vector field induced by the $S^1$-action is given by:
\[K_{p}=i(m_1z_1,m_2z_2)\in T_pS^3=p^\perp, \text{ where } p=(z_1,z_2)\in S^3.\] 
The vector field $K$ vanishes nowhere, since $|K_p|^2=m_1^2|z_1|^2+m_2^2|z_2|^2>0$, for all $p\in S^3$. For $p\in S^3\setminus (\{0\}\times S^1\cup S^1\times \{0\})$, the orthogonal complement of $K_p$ in $T_pS^3$ (w.r.t. the round metric) is spanned by the horizontal vector fields
$$ \witi X_1(p):=i(m_2|z_2|^2z_1, -m_1|z_1|^2z_2), \quad  \witi X_2(p):=(|z_2|^2z_1, -|z_1|^2z_2), $$
which are also $S^1$-invariant. Hence they project to the vector fields $X_1$, resp. $X_2$ on $\mC P^1(m_1,m_2)$. 

We define the connection $1$-form $\omega$ on $S^3$ whose kernel is given by the orthogonal complement of $K$ and normed such that $\omega(K)=1$, $\omega:=\frac{\<K,\cdot\>}{|K|^2}$. The $2$-form $\Omega:=\di \om$ is $S^1$-invariant and thus projects onto a $2$-form on $\mC P^1(m_1,m_2)$, which we denote by the same symbol. It follows that
\[\Omega_{\pi(p)}(X_1,X_2)=-\omega_p([\witi X_1, \witi X_2])=\frac{2m_1m_2|z_1|^2|z_2|^2}{m_1^2|z_1|^2+m_2^2|z_2|^2},\]
since we have $\di\witi X_1(\witi X_2)-\di\witi X_2(\witi X_1)=-2i|z_1|^2|z_2|^2(m_2z_1,m_1z_2)$.

We now introduce the following complex coordinates on $\mC P^1(m_1,m_2)\setminus\{[0:1]\}$. 
\begin{eqnarray*}
\varphi: \mC P^1(m_1,m_2)\setminus \{[0:1]\} & \longrightarrow & \mC \\
    {[ z_1 : z_2 ]} & \longmapsto & z:=\frac{z_2^{m_1}}{z_1^{m_2}}.
\end{eqnarray*}
It follows that for any 
$p\in S^3\setminus(\{0\}\times S^1)$, the tangent linear map of the projection is given by 
\[\pi_*(p)=\biggl(-m_2\frac{z_2^{m_1}}{z_1^{m_2+1}}, m_1\frac{z_2^{m_1-1}}{z_1^{m_2}}\biggr)\]
and the vector fields $X_1$ and $X_2$ are 
\[X_1(z)=-(m_2^2|z_2|^2+m_1^2|z_1|^2)iz, \quad X_2(z)=-(m_2|z_2|^2+m_1|z_1|^2)z.\]
These together imply the following:
\begin{eqnarray*}
\Omega_z & = &\frac{-m_1m_2|z_1|^2|z_2|^2}{(m_2^2|z_2|^2+m_1^2|z_1|^2)^2(m_2|z_2|^2+m_1|z_1|^2)|z|^2}i\di z\wedge \di\bar z,\\
c_1(L, \mC P^1(m_1,m_2)) & = & \frac{1}{2\pi}\int_{\mC P^1(m_1,m_2)}\hspace{-0.7cm}\Omega= -\int_0^\infty \hspace{-0.4cm}\frac{2m_1m_2r(1-r)}{(m_2^2+(m_1^2-m_2^2)r)^2(m_2+(m_1-m_2)r)}\frac{\di\rho}{\rho}\\
& = & \int_0^1\frac{m_1m_2}{(m_2^2+(m_1^2-m_2^2)r)^2}\di r=\frac{1}{m_1m_2},
\end{eqnarray*}
where $r=|z_1|^2$, $\rho=|z|$ and $\rho=\frac{(1-r)^{\frac{m_1}{2}}}{r^{\frac{m_2}{2}}}$. 

%%%%%%%%%%%%%%%%%%%%%%%%%%%%%%%%%%%%%%%%%%%
\subsection{Computation of $\boldsymbol{\chi(\mC P^1(m_1,m_2))}$ }
%%%%%%%%%%%%%%%%%%%%%%%%%%%%%%%%%%%%%%%%%%%

The quotient metric $g$ induced on $\mC P^1(m_1,m_2)$ by the standard metric of $S^3$ is uniquely determined by setting that the following two vector fields  of the tangent space of $\mC P^1(m_1,m_2)$ at $z\in\mC\setminus\{0\}$ build an orthonormal base:
\[e_1(z):=\frac{X_1(z)}{|\witi X_1(z)|}=\lambda_1(z)iz, \quad e_2(z):=\frac{X_2(z)}{|\witi X_2(z)|}=\lambda_2(z)z,\]
where $\lambda_j(z):=\witi\lambda_j
\circ\gamma^{-1}(|z|^2)$,  $\witi\lambda_1(t):=-\frac{\sqrt{(m_1^2-m_2^2)t+m_2^2}}{\sqrt{t(1-t)}}$, $\witi\lambda_2(t):=-\frac{(m_1-m_2)t+m_2}{\sqrt{t(1-t)}}$ and $\gamma$ is the diffeomorphism $\gamma(r):=\frac{(1-r)^{m_1}}{r^{m_2}}$, for $r\in(0,1)$ and $|z|^2=\frac{(1-|z_1|^2)^{m_1}}{|z_1|^{2m_2}}=\gamma(|z_1|^2)$.
We consider $\Theta$ to be the Levi-Civita connection $1$-form. We have
$$\Theta(v):=g(\nabla_v e_2, e_1)=g([e_1,e_2],v).$$
We first compute the Lie bracket:
\begin{equation*}
[e_1,e_2]  =   \lambda_1  \lambda_2[iz,z]+ \lambda_1 \di \lambda_2(iz)z- \lambda_2 \di \lambda_1(z)iz=-\frac{ \di \lambda_1(e_2)}{ \lambda_1}e_1,
\end{equation*}
since $[iz,z]=0$ and $\di  \lambda_j=2(\witi\lambda_j\circ
\gamma^{-1})'(|\cdot|^2)z$, for $j=1,2$, which implies $\di \lambda_2(iz)=0$.
Secondly, we compute the Gaussian curvature of $\mC P^1(m_1,m_2)$:
\begin{multline*}
\kappa=\di\Theta(e_1,e_2)=-\di(g([e_1,e_2],e_1))(e_2)-\Theta([e_1,e_2])=\di\biggl(
\frac{ \di \lambda_1(e_2)}{ \lambda_1}\biggr)(e_2)-\biggl(\frac{ \di \lambda_1(e_2)}{ \lambda_1}\biggr)^2
\end{multline*}
Hence $\di\Theta=\kappa e_1^*\wedge e_2^*=-\frac{\kappa}{|\cdot|^2 \lambda_1 \lambda_2 }\di x\wedge \di y$. By the orbifold Gau\ss--Bonnet theorem, it follows that
\[\chi(\mC P^1(m_1,m_2))=\frac{1}{2\pi}\int_{\mC P^1(m_1,m_2)} \di\Theta=\frac{-1}{2}\int_0^\infty \frac{\kappa}{|\cdot|^2 \lambda_1 \lambda_2}  \di |z|^2=
\frac{1}{2}\int_0^1\frac{\kappa(r)\gamma'(r)}{\witi\lambda_1(r)\witi\lambda_2(r)\gamma(r)}\di r,\]
since the functions $\lambda_j$ are radial and thus $\kappa$ is also radial.
Substituting $\kappa$ in the last integral, we get
\begin{equation*}
\begin{split}
\chi(\mC P^1(m_1,m_2))= &  {2}\int_0^1\biggl(\biggl(\frac{\witi\lambda_2\witi\lambda'_1\gamma}{\witi\lambda_1\gamma'}\biggr)'\frac{\gamma\witi\lambda_2}{\gamma'}-\biggl(\frac{\witi\lambda_2\witi\lambda'_1\gamma}{\witi\lambda_1\gamma'}\biggr)^2\biggr)\frac{\gamma'}{\witi\lambda_1\witi\lambda_2\gamma}\di r\\
=  & {2}\int_0^1\biggl(\frac{\witi\lambda_2\witi\lambda'_1\gamma}{\witi\lambda_1\gamma'}\biggr)'
\frac{1}{\witi\lambda_1}-
\frac{\witi\lambda_2 (\witi\lambda_1')^2\gamma}{\witi\lambda_1^3\gamma'}\di r
=  {2}\biggl[\frac{\witi\lambda_2\witi\lambda'_1\gamma}{\witi\lambda^2_1\gamma'}\biggr]_0^1=\frac{1}{m_1}+\frac{1}{m_2}.
\end{split}
\end{equation*}

%%%%%%%%%%%%%%%%%%%%%%%%%%%%%%%%%%%%%%%%%%%%%%%%%%%%%%%%%%%%%%%%%%%%%%%%%
\bibliographystyle{amsplain}
\bibliography{equiyamabe}
%%%%%%%%%%%%%%%%%%%%%%%%%%%%%%%%%%%%%%%%%%%%%%%%%%%%%%%%%%%%%%%%%%%%%%%%%

%%%%%%%%%%%%%%%%%%%%%%%%%%%%%%%%%%%%%%%%%%%%%%%%%%%%%%%%%%%%%%%%%%%%%%%%%
\end{document}